%
%
%
%
\documentclass{amsart}
\usepackage{amsmath}

\usepackage[all]{xy}

\newtheorem{theorem}{Theorem}[section]
\newtheorem{lemma}[theorem]{Lemma}

\theoremstyle{definition}
\newtheorem{definition}[theorem]{Definition}
\newtheorem{example}[theorem]{Example}
\newtheorem{proposition}[theorem]{Proposition}
\newtheorem{corollary}[theorem]{Corollary}

\theoremstyle{remark}
\newtheorem{remark}[theorem]{Remark}

\numberwithin{equation}{section}



\begin{document}

\title{On a spectral sequence for twisted cohomologies}

\author{Weiping Li}
\address{Department of Mathematics,
         Oklahoma State University,
         Stillwater, OK 74078, U.S.A}

\email{wli@math.okstate.edu}

\author{Xiugui Liu}
\address{School of Mathematical Sciences and LPMC,
         Nankai University,
         Tianjin 300071, P.R.China}
\email{xgliu@nankai.edu.cn}
\thanks{The second author was partially
supported by NCET and NNSFC (No. 10771105). }

\author{He Wang}
\address{School of Mathematical Sciences,
         Nankai University,
         Tianjin 300071, P.R.China}
\email{wanghe85@yahoo.com.cn}

\subjclass[2000]{Primary 58J52; Secondary 55T99, 81T30}



\keywords{Spectral sequence, twisted de Rham cohomology, Massey
product, differential}

\begin{abstract}
Let ($\Omega^{\ast}(M), d$) be the de Rham cochain complex for a
smooth compact closed manifolds $M$ of dimension $n$. For an
odd-degree closed form $H$, there are a twisted de Rham cochain
complex $(\Omega^{\ast}(M), d+H_\wedge)$ and its associated twisted
de Rham cohomology $H^*(M,H)$. We show that there exists a spectral
sequence $\{E^{p, q}_r, d_r\}$ derived from the filtration
$F_p(\Omega^{\ast}(M))=\bigoplus_{i\geq p}\Omega^i(M)$ of
$\Omega^{\ast}(M)$, which converges to the twisted de Rham
cohomology $H^*(M,H)$. We also show that the differentials in the
spectral sequence can be given in terms of cup products and specific
elements of Massey products as well, which generalizes a result of
Atiyah and Segal. Some results about the indeterminacy of
differentials are also given in this paper.

\end{abstract}

\maketitle

\section{Introduction}

Let $M$ be a smooth compact closed manifold of dimension $n$, and
$\Omega^{\ast}(M)$ the space of smooth differential forms over
$\mathbb{R}$ on $M$. We have the de Rham cochain complex
$(\Omega^{\ast}(M), d),$ where $
d:\Omega^p(M)\rightarrow\Omega^{p+1}(M)$ is the exterior
differentiation, and its cohomology $H^{\ast}(M)$ (the de Rham
cohomology). The de Rham cohomology with coefficients in a flat
vector bundle is an extension of the de Rham cohomology.

The twisted de Rham cohomology was first studied by Rohm and Witten
\cite{R-W} for the antisymmetric field in superstring theory. By
analyzing the massless fermion states in the string sector, Rohm and
Witten obtained the twisted de Rham cochain complex
$(\Omega^{\ast}(M), d+H_3)$ for a closed 3-form $H_3$, and mentioned
the possible generalization to a sum of odd closed forms. A key
feature in the twisted de Rham cohomology  is that the theory is not
integer graded but (like K-theory) is filtered with the grading mod
$2$. This has a close relation with the twisted K-theory and the
Atiyah-Hirzebruch spectral sequence (see \cite{A-S}).

Let $H$ be $\sum_{i=1}^{[\frac{n-1}{2}]}H_{2i+1}$, where $H_{2i+1}$
is a closed $(2i+1)$-form. Then one can define a new operator
$D=d+H$ on $\Omega^{\ast}(M)$, where $H$ is understood as an
operator acting by exterior multiplication (for any differential
form $w$, $H(w)=H\wedge w$). As in \cite{A-S, R-W}, there is a
filtration on $(\Omega^{\ast}(M),D)$:
\begin{equation}K_p=F_p(\Omega^{\ast}(M))=\bigoplus\limits_{i\geq
p}\Omega^i(M).\end{equation} This filtration gives rise to a
spectral sequence
\begin{equation}\label{ss}
\{E^{p, q}_r, d_r\}
\end{equation}converging to the twisted de Rham cohomology
$H^{\ast}(M, H)$ with
\begin{equation}E_2^{p,q}\cong\left\{\begin{array}{ll}H^p(M) &
~~~\mbox{$q$
is even,} \\
0& ~~~\mbox{$q$ is odd.}\end{array}\right.\end{equation}

For convenience, we first fix some notations in this paper. The
notation $[r]$ denotes the greatest integer part of $r\in {\mathbb
R}$. In the spectral sequence (\ref{ss}) $\{E^{p, q}_r, d_r\}$, for
any $[y_p]_k\in E_k^{p,q}$, $[y_p]_{k+l}$ represents its class to
which $[y_p]_k$ survives in $E_{k+l}^{p,q}.$ In particular, as in
Proposition~\ref{e3}, for $x_p\in E_1^{p,q}$, $[x_p]_2=[x_p]_3\in
E_2^{p,q}=E_3^{p,q}$ represents the de Rham cohomology class
$[x_p]$. $d_r[x_p]$ represents a class in $E_{2}^{p+r, q-r+1}$ which
survives to $d_{r}[x_p]_{r}\in E_{r}^{p+r, q-r+1}$.

In the appendix I of \cite{R-W}, Rohm and Witten first gave a
description of the differentials $d_3$ and $d_5$ for the case when
$D=d+H_3$. Atiyah and Segal \cite{A-S} showed a method about how to
construct the differentials in terms of Massey products, and gave a
generalization of Rohm and Witten's result: {the iterated Massey
products with $H_3$ give (up to sign) all the higher differentials
of the spectral sequence for the twisted cohomology} (see
\cite[Proposition 6.1]{A-S}). Mathai and Wu in \cite[p. 5]{M-W}
considered the general case that
$H=\sum_{i=1}^{[\frac{n-1}{2}]}H_{2i+1}$ and claimed, without proof,
that {$d_2=d_4=\cdots=0$, while $d_3$, $d_5$, $\cdots$ are given by
the cup products with $H_3$, $H_5$, $\cdots$ and by the higher
Massey products with them.} Motivated by the method in \cite{A-S},
we give an explicit description of the differentials in the spectral
sequence (\ref{ss}) in terms of Massey products.

We now describe our main results. Let $A$ denote a defining system
for the $n$-fold Massey product $\langle x_1,x_2,\cdots,x_n\rangle$
and $c(A)$ its related cocycle (see Definition \ref{5.00}). Then
\begin{equation}\langle x_1,x_2,\cdots,x_n\rangle=\{c(A)|A~{\rm
is~a~defining~system~for}\langle
x_1,x_2,\cdots,x_n\rangle\}\end{equation} by Definition
\ref{Massey2}. To obtain our desired theorems by specific elements
of Massey products, we restrict the allowable choices of defining
systems for Massey products (cf. \cite{sh}). By Theorems \ref{4.1}
and \ref{4.2} in this paper, there are defining systems for the two
Massey products we need (see Lemma \ref{ds}). The notation $\langle
\underbrace{H_3,\cdots,H_3}\limits_{t+1},x_p\rangle_{A}$ in Theorem
\ref{main2} below denotes a cohomology class in $H^{\ast}(M)$
represented by $c(A)$, where $A$ is a defining system obtained by
Theorem \ref{4.1} (see Definition \ref{Massey1}). Similarly, the
notation $\langle
\underbrace{H_{2s+1},\cdots,H_{2s+1}}\limits_{l},x_p\rangle_{A}$ in
Theorem \ref{main3} below denotes a cohomology class in
$H^{\ast}(M)$ represented by $c(A)$, where $A$ is a defining system
obtained by Theorem \ref{4.2} (see Definition \ref{Massey1}).

\begin{theorem}\label{main2}
For $H=\sum_{i=1}^{[\frac{n-1}{2}]}H_{2i+1}$ and $[x_p]_{2t+3}\in
E_{2t+3}^{p,q}$ $(t\geq 1)$, the differential of the spectral
sequence (\ref{ss}) $d_{2t+3}: E_{2t+3}^{p, q}\to E_{2t+3}^{p+2t+3,
q-2t-2}$ is given by
\begin{equation*}
d_{2t+3}[x_p]_{2t+3}=(-1)^t[\langle
\underbrace{H_3,\cdots,H_3}\limits_{t+1},x_p\rangle_{A}]_{2t+3},
\end{equation*}
and $[\langle
\underbrace{H_3,\cdots,H_3}\limits_{t+1},x_p\rangle_{A}]_{2t+3}$ is
independent of the choice of the defining system $A$ obtained from
Theorem \ref{4.1}.
\end{theorem}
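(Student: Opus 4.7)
The approach is to adapt the method of Atiyah and Segal \cite{A-S}: construct an explicit $D$-lift of $x_p$ modulo high filtration, and identify the top-filtration obstruction with the Massey-product cocycle $c(A)$. A class $[x_p]_{2t+3}\in E_{2t+3}^{p,q}$ is represented by a closed form $x_p$ admitting forms $y_{p+2k}\in\Omega^{p+2k}(M)$, $k=1,\ldots,t+1$, such that $\tilde x:=x_p+\sum_{k=1}^{t+1}y_{p+2k}$ lies in $F_p$ and $D\tilde x\in F_{p+2t+3}$. Expanding $D\tilde x=d\tilde x+\sum_i H_{2i+1}\wedge\tilde x$ and equating components in each filtration quotient $F_{p+s}/F_{p+s+1}$ for $s<2t+3$ produces a recursive system of equations on the $y_{p+2k}$'s. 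By Lemma \ref{ds} (which rests on Theorem \ref{4.1}), this system can be solved by choosing the $y_{p+2k}$ to be the first-row entries of a defining system $A$ for $\langle H_3,\ldots,H_3,x_p\rangle$ with $t+1$ copies of $H_3$, arranged so that the cross-contributions of $H_5,H_7,\ldots$ cancel at every intermediate filtration level.

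Given such a lift, the leading component of $D\tilde x$ in $F_{p+2t+3}/F_{p+2t+4}$ will agree, up to an overall sign, with the associated cocycle $c(A)$ of Definition \ref{5.00}. The sign $(-1)^t$ should emerge as a bookkeeping consequence of the Massey-product sign convention $(-1)^{|a_{1,i}|+1}$ combined over $t+1$ odd-degree factors $H_3$. The standard description of the differential of a filtered-complex spectral sequence then yields
\[
d_{2t+3}[x_p]_{2t+3}=[D\tilde x]_{2t+3}=(-1)^t[\langle H_3,\ldots,H_3,x_p\rangle_A]_{2t+3},
\]
as required.

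For the independence statement, I would show that two defining systems $A,A'$ both obtained via Theorem \ref{4.1} differ by entries whose discrepancy can itself be $D$-lifted one filtration step further. The difference of the associated cocycles $c(A)-c(A')$ is then $D$-cohomologous modulo $F_{p+2t+4}$ to a class that is killed on the $E_{2t+3}$-page, giving $[c(A)]_{2t+3}=[c(A')]_{2t+3}$.

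The main obstacle is the combinatorial bookkeeping: one must verify term by term that the recursive equations for $D\tilde x\in F_{p+2t+3}$ match the defining-system recursion provided by Theorem \ref{4.1}, including the precise cancellation of every $H_{2i+1}$ cross-contribution at each intermediate filtration step, and that the Koszul signs from the defining-system cocycle formula accumulate to exactly $(-1)^t$. A secondary subtlety is keeping the $\mathbb Z/2$-graded structure of the total complex separate from the $\mathbb Z$-graded filtration index so that the pages $E_{2t+3}^{p+2t+3,q-2t-2}$ are correctly identified throughout.
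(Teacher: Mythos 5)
Your plan is essentially the paper's own argument: Theorem \ref{4.1} supplies the $D$-lift $x=x_p+\sum_i x^{(t)}_{p+2i}$ with $Dx\in K_{p+2t+3}$, Lemma \ref{ds} packages these data as a defining system $A$, and the top component $y_{p+2t+3}$ of $Dx$ equals $(-1)^t c(A)$ up to the exact term $dx_{p+2t+2}$, which dies on the $E_{2t+3}$-page; this gives the stated formula. Three corrections to your description, though. First, the cross-contributions of $H_5,H_7,\dots$ do \emph{not} cancel at intermediate filtration levels: they are absorbed into $A$ as the off-diagonal entries $a_{i,i+k}=(-1)^kH_{2k+3}$ of the matrix (\ref{big}), so that the defining-system equations reproduce exactly the recursion (\ref{6.7}), and they appear explicitly in $c(A)=(-1)^t\bigl(\sum_{i=1}^tH_{2i+1}\wedge x^{(t)}_{p+2(t-i+1)}+H_{2t+3}\wedge x_p\bigr)$; only the notation $\langle H_3,\dots,H_3,x_p\rangle_A$ conceals them (cf.\ Remark \ref{ljy}(3)). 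If you literally arranged for those terms to cancel you would compute the wrong cocycle. Second, the lift components occupy the last column of $A$, not the first row; the first row is $H_3,-H_5,H_7,\dots$, and $c(A)$ pairs the first row against the last column. Third, your independence argument is an unnecessary detour: once the identity $d_{2t+3}[x_p]_{2t+3}=(-1)^t[c(A)]_{2t+3}$ is established for \emph{every} defining system $A$ arising from Theorem \ref{4.1}, independence is immediate, since the left-hand side is a fixed, well-defined element of $E_{2t+3}^{p+2t+3,q-2t-2}$; no direct comparison of $c(A)$ and $c(A')$ is needed.
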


Specializing Theorem \ref{main2} to the case in which $H=H_{2s+1}$
$(s\geq2)$, we obtain
\begin{equation}d_{2t+3}[x_p]_{2t+3}=(-1)^{t}[\langle
\underbrace{0,\cdots,0}\limits_{t+1},x_p\rangle_A]_{2t+3}.\end{equation}
Obviously, much information has been concealed in the expression
above. In particular, we give a more explicit expression of
differentials for this special case which is compatible with Theorem
\ref{main2} (see Remark \ref{5.9}).

\begin{theorem}\label{main3} For  $H=H_{2s+1}$ $(s\geq 1)$ only and $[x_p]_{2t+3}\in
E_{2t+3}^{p,q}$ $(t\geq 1)$, the differential of the spectral
sequence (\ref{ss}) $d_{2t+3}: E_{2t+3}^{p, q}\to E_{2t+3}^{p+2t+3,
q-2t-2}$ is given by
\[d_{2t+3}[x_p]_{2t+3}=\left\{ \begin{array}{ll}
[H_{2s+1} \wedge x_p]_{2t+3} & t=s-1,\\

(-1)^{l-1}[\langle \underbrace{H_{2s+1},\cdots,H_{2s+1}}\limits_{l},x_p\rangle_{B}]_{2t+3}  & t=ls-1~(l\geq 2),\\

0 & \text{otherwise,} \end{array} \right.
\]
and $[\langle
\underbrace{H_{2s+1},\cdots,H_{2s+1}}\limits_{l},x_p\rangle_{B}]_{2t+3}$
is independent of the choice of the defining system $B$ obtained
from Theorem \ref{4.2}.
\end{theorem}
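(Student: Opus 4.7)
The plan is to mimic the structure of the proof of Theorem~\ref{main2}, while exploiting the simplification specific to a single odd-degree form: since $H_{2s+1}$ has odd degree we have $H_{2s+1}\wedge H_{2s+1}=0$, which collapses most of the entries of any defining system for the iterated Massey product $\langle H_{2s+1},\ldots,H_{2s+1},x_p\rangle$. The restricted defining system $B$ produced by Theorem~\ref{4.2} accordingly has $b_{ij}=0$ for all $1\leq i<j\leq l$, with nontrivial data confined to the last column $\{b_{i,l+1}:2\leq i\leq l\}$, and this is precisely the structural feature that makes the identification with the spectral-sequence differentials transparent.

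The first step is to compute $d_{2t+3}[x_p]_{2t+3}$ via the standard filtered-complex recipe: find a lift $\tilde{x}\in F^p$ of $x_p$ with $D\tilde{x}\in F^{p+2t+3}$, and read off $[D\tilde{x}]$ in $E_{2t+3}^{p+2t+3,q-2t-2}$. Because $D-d=H_{2s+1}\wedge$ raises filtration by exactly $2s+1$, I would build $\tilde{x}=x_p+w_1+\cdots+w_J$ inductively, with each $w_j\in\Omega^{p+2js}(M)$ chosen to satisfy $dw_j=-H_{2s+1}\wedge w_{j-1}$ and $w_0:=x_p$. Each such $w_j$ exists precisely when $[H_{2s+1}\wedge w_{j-1}]=0$ in $H^{\ast}(M)$, and the hypothesis that $[x_p]$ survives to $E_{2t+3}$ ensures this vanishing whenever $2js+1<2t+3$. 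One then obtains $D\tilde{x}=H_{2s+1}\wedge w_J\in F^{p+2(J+1)s+1}$.

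When $t\neq ls-1$ for any integer $l$, one takes $J=\lfloor(t+1)/s\rfloor$; a short arithmetic check using $(t+1)\bmod s\neq 0$ gives $2(J+1)s+1\geq 2t+5$, so $D\tilde{x}\in F^{p+2t+4}$ and hence $d_{2t+3}[x_p]_{2t+3}=0$. When $t=ls-1$ one takes $J=l-1$, so that $D\tilde{x}=H_{2s+1}\wedge w_{l-1}$ lies exactly in filtration $p+2ls+1$. Matching these lifts to the last-column entries of the defining system $B$ from Theorem~\ref{4.2} under the sign convention $\bar{a}=(-1)^{|a|+1}a$, an inductive check yields
\begin{equation*}
b_{l+1-j,\,l+1}=(-1)^{j}\,w_j\qquad (1\leq j\leq l-1).
\end{equation*}
Since $b_{1k}=0$ for $2\leq k\leq l$, the related cocycle $c(B)=\sum_{k=1}^{l}\bar{b}_{1k}\wedge b_{k+1,l+1}$ collapses to its $k=1$ summand, giving $c(B)=(-1)^{l-1}H_{2s+1}\wedge w_{l-1}$. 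Therefore $d_{2ls+1}[x_p]_{2ls+1}=(-1)^{l-1}[c(B)]_{2ls+1}$; the case $l=1$ involves no $w_j$'s and reduces to the cup product $[H_{2s+1}\wedge x_p]_{2s+1}$.

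The remaining task, which I expect to be the main obstacle, is independence of $[c(B)]_{2ls+1}$ from the admissible choice of $B$. Two admissible systems $B,B'$ differ only in their last-column entries; writing $\delta_i:=b'_{i,l+1}-b_{i,l+1}$, the defining-system equations give $d\delta_l=0$ and $d\delta_i=H_{2s+1}\wedge\delta_{i+1}$ for $i<l$, so $c(B')-c(B)=H_{2s+1}\wedge\delta_2$. The sequence $\delta_l,\delta_{l-1},\ldots,\delta_2$ (with appropriate alternating signs) is itself a lift of the de Rham class $[\delta_l]\in H^{p+2s}(M)$ of exactly the type constructed above, but one step shorter. Hence $[H_{2s+1}\wedge\delta_2]$ is, up to sign, the image of $[\delta_l]$ under $d_{2(l-1)s+1}$, so it vanishes in $E_{2(l-1)s+2}^{p+2ls+1,q-2ls}$ and consequently in $E_{2ls+1}^{p+2ls+1,q-2ls}$. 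The genuine technical content is in verifying that the admissibility condition of Theorem~\ref{4.2} leaves no freedom beyond the $\delta_i$-chain analyzed here, so that no further indeterminacy survives.
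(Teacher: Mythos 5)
Your overall strategy is the same as the paper's: the paper first proves a cup-product formula (Theorem~\ref{4.2}) by constructing an inhomogeneous lift $x_p+x_{p+2s}+\cdots$ with $Dx$ concentrated in high filtration, and then reads off $c(B)$ from the matrix (\ref{small}), exactly as you do with your chain $w_0=x_p$, $dw_j=-H_{2s+1}\wedge w_{j-1}$ and the identification $b_{l+1-j,l+1}=(-1)^jw_j$. The sign bookkeeping and the collapse of $c(B)$ to the single term $(-1)^{l-1}H_{2s+1}\wedge w_{l-1}$ are correct, as is the arithmetic showing $D\tilde x\in F^{p+2t+4}$ when $s\nmid(t+1)$.

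The genuine gap is in the existence of the $w_j$. You assert that survival of $[x_p]$ to $E_{2t+3}$ ``ensures'' that $[H_{2s+1}\wedge w_{j-1}]=0$ in $H^{\ast}(M)$ whenever $2js+1<2t+3$, so that $w_j$ with $dw_j=-H_{2s+1}\wedge w_{j-1}$ can be chosen. But survival to $E_{2t+3}$ only gives $d_{2js+1}[x_p]_{2js+1}=0$, i.e.\ the class $[H_{2s+1}\wedge w_{j-1}]$ dies on page $E_{2js+1}$; this means it lies in $B_{2js+1}^{\,\ast,\ast}$, the span of images of \emph{earlier} differentials, not that it vanishes in $H^{\ast}(M)=E_2^{\,\ast,\ast}$. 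To convert that page-level vanishing into honest exactness in $\Omega^{\ast}(M)$ one must go back and correct the previously chosen forms $w_1,\dots,w_{j-1}$ by terms absorbing the $B_r$-contributions; this back-substitution is precisely the laborious iteration in the paper's proofs of Theorems~\ref{4.1} and \ref{4.2} (the $w_{p+2i}$'s in (\ref{4.3})--(\ref{liu3}), invoked again in Case~3 of Theorem~\ref{4.2}), and it is the step your proposal skips. Separately, your independence argument via the difference chain $\delta_i$ is workable but harder than necessary, and you yourself flag that you have not verified that admissible systems differ only in the last column; the paper avoids this entirely by observing that Theorem~\ref{4.2} computes $d_{2t+3}[x_p]_{2t+3}=[H_{2s+1}\wedge x^{(l-1)}_{p+2(l-1)s}]_{2t+3}=(-1)^{l-1}[c(B)]_{2t+3}$ for \emph{every} admissible $B$, so independence follows at once from the well-definedness of the left-hand side.
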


Atiyah and Segal in \cite{A-S} gave the differential expression in
terms of Massey products when $H=H_3$ (see \cite[Proposition
6.1]{A-S}).
Obviously, the result of Atiyah and Segal is a special case of
Theorem \ref{main3}.

Theorem \ref{main2} is essentially Theorem \ref{differentials1}, and
Theorem \ref{main3} is Theorem \ref{differentials2}. Some of the
results above are known to experts in this field, but there is a
lack of mathematical proof in the literature.

This paper is organized as follows. In Section 2, we recall some
backgrounds about the twisted de Rham cohomology. In Section 3, we
consider the structure of the spectral sequence converging to the
twisted de Rham cohomology, and give the differentials $d_i$ ($1\leq
i\leq 3$) and $d_{2k}$ $(k\geq 1)$. With the formulas of the
differentials in $E_{2t+3}^{p,q}$ in Section 4, Theorems \ref{main2}
and \ref{main3} (i.e., Theorems~\ref{differentials1} and
\ref{differentials2}) are shown in Section 5. In Section 6, we
discuss the indeterminacy of differentials of the spectral sequence
(\ref{ss}).

\section{Twisted de Rham cohomology }

For completeness, in this section we recall some knowledge about the
twisted de Rham cohomology. Let $M$ be a smooth compact closed
manifold of dimension $n$, and $\Omega^{\ast}(M)$ the space of
smooth differential forms on $M$. We have the de Rham cochain
complex $(\Omega^{\ast}(M), d)$ with the exterior differentiation
$d:\Omega^p(M)\rightarrow\Omega^{p+1}(M)$, and its cohomology
$H^{\ast}(M)$ (the de Rham cohomology).

Let $H$ denote $\sum_{i=1}^{[\frac{n-1}{2}]}H_{2i+1}$, where
$H_{2i+1}$ is a closed $(2i+1)$-form. Define a new operator $D=d+H$
on $\Omega^{\ast}(M)$, where $H$ is understood as an operator acting
by exterior multiplication (for any differential form $w$,
$H(w)=H\wedge w$, also denoted by $H_{\wedge}$). It is easy to show
that
$$D^2=(d+H)^2=d^2+dH+Hd+H^2=0.$$ However $D$ is not homogeneous on the space of smooth differential
forms $\Omega^{\ast}(M)=\bigoplus\limits_{i\geq0}\Omega^i(M)$.

Define $\Omega^{\ast}(M)$ a new (mod 2) grading
\begin{equation}
\Omega^{\ast}(M)=\Omega^o(M)\oplus\Omega^{e}(M),
\end{equation}
 where
\begin{equation}
\begin{array}{lclc}
\Omega^o(M)=\bigoplus\limits_{i\geq0\atop \rm{i\equiv 1 \pmod
2}}\Omega^i(M) \quad\mbox{and}\quad
\Omega^e(M)=\bigoplus\limits_{i\geq0\atop \rm{i\equiv 0\pmod
2}}\Omega^i(M).
\end{array}
\end{equation}
Then $D$ is homogenous for this new (mod 2) grading:
$$\Omega^e(M)\stackrel{D}\longrightarrow \Omega^o(M)\stackrel{D}\longrightarrow \Omega^e(M).$$
 Define the
twisted de Rham cohomology groups of $M$:
\begin{equation}\label{x}
H^o(M,H)=\frac{\mathrm{ker}[D:\Omega^o(M)\rightarrow\Omega^e(M)]}{\mathrm{im}[D:\Omega^e(M)\rightarrow\Omega^o(M)]}
\end{equation}
and
\begin{equation} \label{2.1}
H^e(M,H)=\frac{\mathrm{ker}[D:\Omega^e(M)\rightarrow\Omega^o(M)]}{\mathrm{im}[D:\Omega^o(M)\rightarrow\Omega^e(M)]}.
\end{equation}

\begin{remark}
${\rm (i)}$ The twisted de Rham cohomology groups $H^*(M, H)$ $(*=o,
e)$ depend on the closed form $H$ and not just on its cohomology
class. If $H$ and $H^{'}$ are cohomologous, then $H^*(M, H)\cong
H^*(M, H^{'})$  $($see \cite[\S 6]{A-S}$)$.

${\rm (ii)}$ The twisted de Rham cohomology is also an important
homotopy invariant $($see \cite[\S 1.4]{M-W}$)$.
\end{remark}

Let $E$ be a flat vector bundle over $M$ and ${\Omega}^i(M, E)$ be
the space of smooth differential $i$-forms on $M$ with values in
$E$. A flat connection on $E$ gives a linear map
\[\nabla^E: {\Omega}^i(M, E) \to {\Omega}^{i+1}(M, E)\]
such that, for any smooth function $f$ on $M$ and any $\omega \in
{\Omega}^i(M, E)$,
\[\nabla^E (f \omega ) = df \wedge \omega + f \cdot \nabla^E\omega, \ \ \ \nabla^E \circ \nabla^E = 0.\]
Similarly, define $\Omega^{\ast}(M, E)$ a new $\pmod 2$ grading
\begin{equation}
\Omega^{\ast}(M, E)=\Omega^o(M, E)\oplus\Omega^{e}(M, E),
\end{equation}
 where
\begin{equation}
\begin{array}{lclc}
\Omega^o(M, E)=\bigoplus\limits_{i\geq0\atop \rm{i\equiv 1 \pmod
2}}\Omega^i(M, E) \quad\mbox{and}\quad \Omega^e(M,
E)=\bigoplus\limits_{i\geq0\atop \rm{i\equiv 0\pmod 2}}\Omega^i(M,
E).
\end{array}
\end{equation}
Then $D^E = \nabla^E + H_{\wedge}$ is homogenous for the new (mod 2)
grading:
$$\Omega^e(M, E)\stackrel{D^E}\longrightarrow \Omega^o(M, E)\stackrel{D^E}\longrightarrow \Omega^e(M, E).$$
 Define the
twisted de Rham cohomology groups of $E$:
\begin{equation}\label{y}
H^o(M,E, H)=\frac{\mathrm{ker}[D^E:\Omega^o(M,
E)\rightarrow\Omega^e(M, E)]}{\mathrm{im}[D^E:\Omega^e(M,
E)\rightarrow\Omega^o(M, E)]}
\end{equation}
and
\begin{equation} \label{2.2}
H^e(M,E, H)=\frac{\mathrm{ker}[D^E:\Omega^e(M,
E)\rightarrow\Omega^o(M, E)]}{\mathrm{im}[D^E:\Omega^o(M,
E)\rightarrow\Omega^e(M, E)]}.
\end{equation}
Results proved in this paper are also true for  the twisted de Rham
cohomology groups $H^*(M, E, H)$ ($*=o, e$) with twisted
coefficients in $E$ without any change.

\section{ A spectral sequence for twisted de Rham cohomology and its differentials $d_i$ ($1\leq i\leq 3$), $d_{2k}$ ($k\geq 1$)}

Recall that $D=d+H$ and $H=\sum_{i=1}^{[\frac{n-1}{2}]}H_{2i+1}$,
where $H_{2i+1}$ is a closed $(2i+1)$-form. Define the usual
filtration on the graded vector space $\Omega^{\ast}(M)$ to be
\begin{displaymath}
K_p = F_p(\Omega^{\ast}(M))=\bigoplus\limits_{i\geq p}\Omega^i(M),
\end{displaymath}
and $K= K_0 = \Omega^{\ast}(M)$. The filtration is bounded and
complete,
\begin{equation} \label{3.1}
K\equiv K_0\supset K_1\supset K_2\supset\cdots\supset K_n\supset
K_{n+1}=\{0\}.
\end{equation}
We have $D(K_p)\subset K_p$ and $D(K_p)\subset K_{p+1}$. The
differential $D(=d+H)$ does not preserve the grading of the de Rham
complex. However, it does preserve the filtration $\{K_p\}_{p\geq
0}.$

The filtration $\{K_p\}_{p\geq 0}$ gives an exact couple (with
bidegree) (see \cite{M}). For each $p$, $K_p$ is a graded vector
space with
\begin{displaymath}
K_p=(K_p\cap\Omega^o(M))\oplus(K_p\cap\Omega^e(M))=K_p^o\oplus
K_p^e,
\end{displaymath}
where $K_p^o=K_p\cap\Omega^o(M)$ and $K_p^e=K_p\cap\Omega^e(M)$. The
cochain complex $(K_p,D)$ is induced by
$D:\Omega^{\ast}(M)\longrightarrow\Omega^{\ast}(M)$. Similar to
(\ref{2.1}), there are two well-defined
 cohomology groups $H_D^e(K_p)$ and $H_D^o(K_p)$.
Note that a cochain complex with grading
\begin{displaymath}
K_p/K_{p+1}=(K_p^o/K_{p+1}^o)\oplus(K_p^e/K_{p+1}^e)
\end{displaymath}
derives cohomology groups $H_D^o(K_p/K_{p+1})$ and
$H_D^e(K_p/K_{p+1})$. Since $D(K_p)\subset K_{p+1}$, we have $D=0$
in the cochain complex $(K_p/K_{p+1},D)$.

\begin{lemma}\label{3.8}
For the cochain complex $(K_p/K_{p+1},D)$, we have
\begin{displaymath}
H_D^o(K_p/K_{p+1})\cong\left\{\begin{array}{ll}\Omega^p(M) &
~~~\mbox{$p$
is odd,} \\
0& ~~~\mbox{$p$ is even.}\end{array}\right.
\end{displaymath} and
\begin{displaymath}
H_D^e(K_p/K_{p+1})\cong\left\{\begin{array}{ll}\Omega^p(M) &
~~~\mbox{$p$
is even,} \\
0& ~~~\mbox{$p$ is odd.}\end{array}\right.
\end{displaymath}
\end{lemma}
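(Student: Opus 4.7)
\medskip

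\noindent\textbf{Proof plan.} The plan is to exploit the observation already recorded in the text, namely $D(K_p)\subset K_{p+1}$, to conclude that the induced differential on the quotient complex $(K_p/K_{p+1},D)$ is identically zero. Once this is in place, the two cohomology groups coincide with the underlying quotient vector spaces themselves, and the lemma reduces to a direct computation of $K_p^o/K_{p+1}^o$ and $K_p^e/K_{p+1}^e$.

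First I would verify carefully that $D(K_p)\subset K_{p+1}$: since $d$ raises the de Rham degree by $1$ and wedging with $H_{2i+1}$ raises it by $2i+1\geq 3$, both $d$ and $H_\wedge$ send $\Omega^i(M)$ into $\bigoplus_{j\geq i+1}\Omega^j(M)$, hence $\Omega^i(M)\cap K_p$ into $K_{p+1}$ whenever $i\geq p$. Consequently the differential induced by $D$ on $K_p/K_{p+1}$ vanishes, so
\begin{equation*}
H_D^o(K_p/K_{p+1})\;\cong\;K_p^o/K_{p+1}^o,\qquad H_D^e(K_p/K_{p+1})\;\cong\;K_p^e/K_{p+1}^e.
\end{equation*}

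Next I would compute these quotients by a parity case analysis. By definition $K_p^o=\bigoplus_{i\geq p,\,i\text{ odd}}\Omega^i(M)$ and similarly for $K_{p+1}^o$. If $p$ is odd, then $\Omega^p(M)$ appears in $K_p^o$ but not in $K_{p+1}^o$, while all higher odd-degree summands appear in both, giving $K_p^o/K_{p+1}^o\cong\Omega^p(M)$; on the other hand the even parts satisfy $K_p^e=K_{p+1}^e$ since the smallest even index $\geq p$ is $p+1$, so $K_p^e/K_{p+1}^e=0$. The roles of odd and even reverse when $p$ is even, giving the stated isomorphisms.

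There is no real obstacle here beyond careful bookkeeping of the two gradings (the mod~$2$ grading used to define $H_D^o,H_D^e$ versus the integer grading defining the filtration). The only point worth highlighting in the write-up is that the filtration index $p$ and the mod $2$ parity interact in the obvious way: the single surviving summand in $K_p/K_{p+1}$ is $\Omega^p(M)$, which lies in $\Omega^o(M)$ or $\Omega^e(M)$ exactly according to the parity of $p$.
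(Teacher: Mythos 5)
Your proposal is correct and follows essentially the same route as the paper: both use the observation that $D(K_p)\subset K_{p+1}$ forces the induced differential on $K_p/K_{p+1}$ to vanish, and then identify the cohomology with the quotients $K_p^o/K_{p+1}^o$ and $K_p^e/K_{p+1}^e$ via the same parity case analysis. Your explicit verification that both $d$ and $H_\wedge$ raise degree by at least one is a welcome bit of extra care, but it does not change the argument.
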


\begin{proof} If $p$ is odd, then
$$K_p\cap \Omega^{e}(M)=K_{p+1}\cap \Omega^e(M)~ {\rm and}  ~
(K_p\cap\Omega^{e}(M) )\left/(K_{p+1}\cap \Omega^e(M))\right.=0.$$
Also
$$(K_p\cap \Omega^{o}(M))\left/(K_{p+1}\cap \Omega^o(M))\right.=
K^o_p/K^o_{p+1}\cong \Omega^p(M), $$ and
$$H_D^o(K_p/K_{p+1})\cong\Omega^p(M) ~{\rm and}~ H_D^e(K_p/K_{p+1})=0.$$
Similarly for even $p$, we have
$$H_D^e(K_p/K_{p+1})\cong\Omega^p(M) ~{\rm and}~ H_D^o(K_p/K_{p+1})=0.$$
\end{proof}

By the filtration (\ref{3.1}), we obtain a short exact sequence of
cochain complexes
\begin{equation} \label{3.2}
0\longrightarrow K_{p+1}\stackrel{i}\longrightarrow
K_p\stackrel{j}\longrightarrow K_p/K_{p+1}\longrightarrow 0,
\end{equation}
which gives rise to a long exact sequence of cohomology groups
\begin{equation}\label{3.3}
\begin{array}{cc}
\cdots\longrightarrow &
H_D^{p+q}(K_{p+1})\stackrel{i^{\ast}}\longrightarrow
H_D^{p+q}(K_p)\stackrel{j^{\ast}}\longrightarrow
H_D^{p+q}(K_p/K_{p+1})\\

& \stackrel{\delta}\longrightarrow
H_D^{p+q+1}(K_{p+1})\stackrel{i^{\ast}}\longrightarrow
H_D^{p+q+1}(K_p)\stackrel{j^{\ast}}\longrightarrow\cdots.
\end{array}
\end{equation} Note that in the exact sequence above,

\begin{equation*}
H_D^i(K_p)=\left\{\begin{array}{ll} H_D^e(K_p)&\mbox{$i$ is even,}\\
H_D^o(K_p)&\mbox{$i$ is odd.}
\end{array}
\right.
\end{equation*}
and
\begin{equation*}
H_D^i(K_p/K_{p+1})=\left\{\begin{array}{ll} H_D^e(K_p/K_{p+1})& \mbox{$i$ is even,}\\
H_D^o(K_p/K_{p+1})&\mbox{$i$ is odd.}
\end{array}
\right.
\end{equation*}
Let
\begin{equation} \label{3.4}
\begin{array}{cc}
E_1^{p,q}=H_D^{p+q}(K_p/K_{p+1}),\quad D_1^{p,q}=H_D^{p+q}(K_p),\\

i_1=i^{\ast},\quad j_1=j^{\ast},\quad\mbox{and}\quad k_1=\delta.
\end{array}
\end{equation}
We get an exact couple from the long exact sequence (\ref{3.3})
\begin{equation} \label{3.5}
\xymatrix{ D_1^{\ast,\ast} \ar[rr]^{i_1}
&  &    D_1^{\ast,\ast} \ar[dl]^{j_1}\\
& E_1^{\ast,\ast}\ar[ul]^{k_1}}
\end{equation}
with $i_1$ of bidegree $(-1,1)$, $j_1$ of bidegree $(0,0)$ and $k_1$
of bidegree $(1,0)$.

We have $d_1=j_1k_1:E_1^{\ast,\ast}\longrightarrow E_1^{\ast,\ast}$
with bidegree $(1,0)$, and $d_1^2=j_1k_1j_1k_1=0$. By (\ref{3.5}),
we have the derived couple
\begin{equation} \label{3.6}
\xymatrix{ D_2^{\ast,\ast} \ar[rr]^{i_2}
&  &    D_2^{\ast,\ast} \ar[dl]^{j_2}    \\
& E_2^{\ast,\ast}\ar[lu]^{k_2}}
\end{equation}
by  the following:
\begin{enumerate}
\item $D_2^{\ast,\ast}=i_1D_1^{\ast,\ast}$,
$E_2^{\ast,\ast}=H_{d_1}(E_1^{\ast,\ast})$.

\item $i_2=i_1|_{D_2^{\ast,\ast}}$, also denoted by $i_1$.

\item If $a_2=i_1a_1\in D_2^{\ast,\ast}$, define
$j_2(a_2)=[j_1a_1]_{d_1}$, where $[\ \ ]_{d_1}$ denotes the
cohomology class in $H_{d_1}(E_1^{\ast,\ast}).$

\item For $[b]_{d_1}\in E_2^{\ast,\ast}=H_{d_1}(E_1^{\ast,\ast}),$
define $k_2([b]_{d_1})=k_1b\in D_2^{\ast,\ast}$.\\
\end{enumerate}
The derived couple (\ref{3.6}) is also an exact couple, and $j_2$
and $k_2$ are well-defined (see \cite{Ma, M}).

\begin{proposition} \label{specs}
 ${\rm (i)}$ There exists a spectral sequence $(E^{p,
q}_r, d_r)$ derived from the filtration $\{K_n\}_{n\geq 0}$, where
$E_1^{p, q} = H^{p+q}_D(K_p/K_{p+1})$ and $d_1=j_1k_1$, and $E^{p,
q}_2=H_{d_1}(E^{p, q}_1)$ and $d_2=j_2k_2.$ The bidegree of $d_r$ is
$(r, 1-r)$.

${\rm (ii)}$ The spectral sequence $\{E^{p, q}_r, d_r\}$ converges
to the twisted de Rham cohomology
\begin{equation}
\begin{array}{lclc}
\bigoplus\limits_{ \rm{p+q =1 }}E^{p, q}_{\infty}\cong H^o(M, H)
\quad\mbox{and}\quad \bigoplus\limits_{ \rm{p+q= 0}}E^{p,
q}_{\infty}\cong H^e(M, H).
\end{array}
\end{equation}
\end{proposition}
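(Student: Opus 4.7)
Since the exact couple (\ref{3.5}) has already been produced in the preceding discussion, part (i) will reduce to the standard iterative derivation of a spectral sequence from an exact couple. The plan is to construct, inductively on $r\geq 1$, the derived couple $(D_{r+1}^{*,*}, E_{r+1}^{*,*}, i_{r+1}, j_{r+1}, k_{r+1})$ by setting $D_{r+1}^{*,*}=i_r(D_r^{*,*})$, $E_{r+1}^{*,*}=H_{d_r}(E_r^{*,*})$, $i_{r+1}=i_r|_{D_{r+1}^{*,*}}$, $j_{r+1}(i_r a)=[j_r a]_{d_r}$, and $k_{r+1}([b]_{d_r})=k_r b$, and then verify, exactly as was done for $r=1$ in the text, that these maps are well-defined, that the derived couple is again exact, and that $d_{r+1}=j_{r+1}k_{r+1}$ squares to zero. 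This is pure exact-couple bookkeeping; the details are in the references \cite{Ma, M} already cited. For the bidegrees, $i_r$ and $k_r$ retain their original bidegrees $(-1,1)$ and $(1,0)$ respectively, while each derivation shifts $j_r$ by $(1,-1)$, since the definition of $j_{r+1}$ requires pulling back along $i_r$ whose bidegree is $(-1,1)$. By induction $j_r$ has bidegree $(r-1,-(r-1))$, hence $d_r=j_r k_r$ has bidegree $(r, 1-r)$ as claimed.

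For part (ii), I would appeal to the boundedness of the filtration (\ref{3.1}). Since $K_{n+1}=0$ and $K_0=\Omega^{\ast}(M)$, the induced filtration
\[
F^p H_D^{\ast}(K_0)=\mathrm{image}\bigl(i^{\ast}: H_D^{\ast}(K_p)\to H_D^{\ast}(K_0)\bigr)
\]
is a finite chain $0=F^{n+1}\subset F^n\subset\cdots\subset F^0=H_D^{\ast}(K_0)$. The standard convergence theorem for spectral sequences coming from bounded filtered cochain complexes then yields strong convergence with $E_{\infty}^{p,q}\cong F^p H_D^{p+q}(K_0)/F^{p+1} H_D^{p+q}(K_0)$. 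Finally, the mod $2$ convention of the paper identifies $H_D^i(K_0)$ with $H^e(M,H)$ when $i$ is even and with $H^o(M,H)$ when $i$ is odd; summing the isomorphisms $E_{\infty}^{p,q}\cong \mathrm{gr}^p H_D^{p+q}(K_0)$ over all $(p,q)$ with $p+q=0$ telescopes the finite filtration to reproduce $H^e(M,H)$ as a vector space, and likewise $p+q=1$ recovers $H^o(M,H)$.

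The only delicate point in the argument is the consistent parity bookkeeping forced by the fact that $D$ is not homogeneous for the integer grading on $\Omega^{\ast}(M)$; one must be careful that the $H_D^i$ occurring in the derived couples is interpreted via the $\mathbb{Z}/2$-grading, as was already used in Lemma \ref{3.8}. Once this convention is fixed, both (i) and (ii) follow from the standard spectral-sequence machinery and the finiteness of $n$, with no further geometric input beyond the exact couple (\ref{3.5}).
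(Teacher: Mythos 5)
Your argument is correct and is exactly the route the paper takes: the paper's proof of Proposition \ref{specs} simply observes that the filtration is bounded and complete and defers to the standard exact-couple machinery in \cite{M}, which is precisely what you have written out in detail (with the bidegree bookkeeping and the telescoping of the finite filtration done correctly). No discrepancy.
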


\begin{proof} Since the filtration is bounded and complete, the
proof follows from the standard algebraic topology method (see
\cite{M}). \end{proof}

\begin{remark}
\begin{enumerate}\item Note that
$$H_D^i(K_p)=\left\{\begin{array}{ll} H_D^e(K_p)&\mbox{$i$ is even,}\\
H_D^o(K_p)&\mbox{$i$ is odd.}
\end{array}
\right.$$ and $$
H_D^i(K_p/K_{p+1})=\left\{\begin{array}{ll} H_D^e(K_p/K_{p+1})&\mbox{$i$ is even,}\\
H_D^o(K_p/K_{p+1})&\mbox{$i$ is odd.}
\end{array}
\right.
$$
Then we have that $H_D^i(K_p)$ and $H_D^i(K_p/K_{p+1})$ are
$2$-periodic on $i$. Consequently, the spectral sequence
$\{E_r^{p,q}, d_r\}$ is $2$-periodic on $q$.

\item There is also a spectral sequence converging to the twisted
cohomology $H^*(M, E, H)$ for a flat vector bundle $E$ over $M$.
\end{enumerate}
\end{remark}

\begin{proposition}\label{e3}
For the spectral sequence in Proposition~\ref{specs},

${\rm (i)}$ The $E_1^{*, *}$-term is given by
\begin{equation*}
E_1^{p,q}=H_D^{p+q}(K_p/K_{p+1})\cong\left\{\begin{array}{ll}\Omega^p(M)
& ~~~\mbox{$q$
is even,} \\
0& ~~~\mbox{$q$ is odd.}\end{array}\right.
\end{equation*} and
$d_1x_p=dx_p$ for any $x_p\in E_1^{p,q}$.

${\rm (ii)}$ The $E_2^{*, *}$-term is given by
\begin{equation*}
E_2^{p,q}=H_{d_1}(E_1^{p, q})\cong\left\{\begin{array}{ll}H^p(M) &
~~~\mbox{$q$
is even,} \\
0& ~~~\mbox{$q$ is odd.}\end{array}\right.
\end{equation*}
and $d_2 = 0$.

${\rm (iii)}$ $E_3^{p, q}= E_2^{p, q}$ and $d_3[x_p]=[H_3\wedge
x_p]$ for $[x_p]_3\in E_3^{p, q}$.
\end{proposition}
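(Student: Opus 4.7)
The plan is to treat the three parts in order, using the exact couple built before the proposition together with Lemma~\ref{3.8}.

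For part~(i), since $D(K_p)\subset K_{p+1}$, the induced differential on $K_p/K_{p+1}$ vanishes, so $E_1^{p,q}=H_D^{p+q}(K_p/K_{p+1})$ equals the $(p+q)$-graded piece of $K_p/K_{p+1}$ itself. A short parity case-check against Lemma~\ref{3.8} gives $E_1^{p,q}\cong\Omega^p(M)$ when $q$ is even and $0$ otherwise. To compute $d_1=j_1k_1$, I would lift $x_p\in\Omega^p(M)$ to $K_p$ and apply $D=d+H_3+H_5+\cdots$: the summand $dx_p$ lies in $\Omega^{p+1}(M)\subset K_{p+1}$, while each $H_{2i+1}\wedge x_p$ lies in $K_{p+2i+1}\subset K_{p+2}$, so modulo $K_{p+2}$ only $dx_p$ survives. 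Tracing this through the connecting homomorphism $k_1=\delta$ and then $j_1=j^{\ast}$ yields $d_1x_p=dx_p$.

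For part~(ii), once $d_1=d$ is known, $E_2^{p,q}=H_{d_1}(E_1^{\ast,q})$ is $H^p(M)$ when $q$ is even and $0$ otherwise. The vanishing $d_2=0$ is then a parity argument: $d_2\colon E_2^{p,q}\to E_2^{p+2,q-1}$ always has $q$ and $q-1$ of opposite parity, so at least one of the source and target is $0$.

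Part~(iii) is the main content. Since $d_2=0$ we have $E_3=E_2$ automatically. To compute $d_3[x_p]_3$ I would use the standard recipe for spectral sequences of filtered complexes: choose a representative $x_p\in\Omega^p(M)$ which is a de Rham cocycle, so $dx_p=0$, and then
\[
Dx_p=H_3\wedge x_p+H_5\wedge x_p+\cdots \in K_{p+3}.
\]
Modulo $K_{p+4}$ only $H_3\wedge x_p$ survives, and unwinding the two successive derived-couple passages (writing $d_3=j_3k_3$ and tracking how an element of $E_3$ lifts to $D_1^{\ast,\ast}$) identifies $d_3[x_p]_3=[H_3\wedge x_p]_3\in E_3^{p+3,q-2}\cong H^{p+3}(M)$. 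The main bookkeeping obstacle is verifying well-definedness: one must check that replacing $x_p$ by $x_p+d\eta$, or altering its lift by an element of $K_{p+1}$ that is still a $3$-cycle, changes $H_3\wedge x_p$ only by a de Rham coboundary. Both reductions come down to the closedness of $H_3$ together with the Leibniz rule, so the formula descends cleanly to $E_3$.
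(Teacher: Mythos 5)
Your proposal is correct and follows essentially the same route as the paper: the $E_1$-term from Lemma~\ref{3.8} and the vanishing of the induced differential on $K_p/K_{p+1}$, the computation of $d_1$ and $d_3$ by lifting $x_p$ (i.e., taking the higher components of the inhomogeneous lift to be zero), applying $D$, and reading off the lowest-degree surviving term through the connecting map and the derived couples, with $d_2=0$ by the parity of $q$. The well-definedness issue you flag at the end is exactly the point the paper disposes of by noting that the class $[Dx]_D$ is independent of the choice of lift.
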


\begin{proof} (i) By Lemma \ref{3.8}, we have the $E_1^{*,
*}$-term as desired, and by definition we obtain $d_1=j_1k_1:E_1^{p,q}\rightarrow
E_1^{p+1,q}$. We only need to consider the case when $q$ is even,
otherwise $d_1=0.$ By (\ref{3.2}) for odd $p$ (the case when $p$ is
even, is similar), we have a large commutative diagram
\begin{equation}\label{3.7}
\xymatrix{& &\vdots &\vdots &\vdots  & &\\
& 0\ar[r]& K_{p+1}^e\ar[u]_D\ar[r]^{i}& K_{p}^e\ar[u]_D\ar[r]^{j}&0 \ar[u]_D\ar[r]&0\\
&0\ar[r]&K_{p+1}^o\ar[u]_D\ar[r]^i& K_{p}^o\ar[u]_{D}\ar[r]^{j}
&\Omega^p(M) \ar[u]_D\ar[r]&0\\
&0\ar[r]& K_{p+1}^e\ar[u]_D\ar[r]^{i}& K_{p}^e\ar[u]_D\ar[r]^{j}&0 \ar[u]_D\ar[r]&0\\
& &\vdots \ar[u]_D &\vdots\ar[u]_D &\vdots \ar[u]_D & &\\}
\end{equation}
where the rows are exact and the columns are cochain complexes.

Let $x_p\in \Omega^p(M)\cong H_D^{p+q}(K_p/K_{p+1})\cong E_1^{p,q}$
and
\begin{equation}\label{4.9}
 x=\sum_{i=0}^{[\frac{n-p}{2}]}x_{p+2i}
\end{equation}
be an (inhomogeneous) form, where $x_{p+2i}$ is a $(p+2i)$-form ($0
\leq i\leq [\frac{n-p}{2}]$). Then $x\in K_p^o$, $jx=x_p$ and $Dx\in
K_p^e$. Also $Dx\in K_{p+1}^e$. By the definition of the
homomorphism $\delta$ in (\ref{3.3}), we have
\begin{equation}\label{3.10}
k_1x_p=[Dx]_D,
\end{equation}
 where $[\ \ ]_D$ is the
cohomology class in $H_D^{\ast}(K_{p+1})$. The class $[Dx]_D$ is
well defined and independent of the choices of $x_{p+2i}$ $(1 \leq
i\leq [\frac{n-p}{2}])$ (see \cite[p. 116]{Ha}).

Choose $x_{p+2i}=0$ ($1 \leq i\leq [\frac{n-p}{2}]$). Then we have
\begin{eqnarray*}
k_1x_p & = & [Dx]_D\\
&=&[dx_p+H\wedge x_p]_D\\
& = & [dx_p + \sum_{l=1}^{[\frac{n-1}{2}]}H_{2l+1}\wedge x_p]_D \in
H_D^{p+q+1}(K_{p+1}).
\end{eqnarray*}
Thus one obtains
\begin{eqnarray*}
d_1x_p &=&(j_1 k_1)x_p\\
&=& j_1(k_1(x_p))\\
& = & j_1[dx_p + \sum_{l=1}^{[\frac{n-1}{2}]}H_{2l+1}\wedge x_p]_D\\
& = & dx_p.
\end{eqnarray*}

(ii) By the definition of the spectral sequence and (i), one obtains
that $E_2^{p,q}\cong H^{p}(M)$ when $q$ is even, and $E_2^{p,q}=0$
when $q$ is odd. Note that $d_2: E^{p, q}_2 \to E_2^{p+2, q-1}$. It
follows that $d_2=0$ by degree reasons.

(iii) Note that $[x_p]_3\in E_3^{p, q}$ implies $dx_p=0$. Choose
$x_{p+2i}=0$ for $1 \leq i\leq [\frac{n-p}{2}]$, and we get
$$[Dx]_D=[H\wedge x_p]_D =[ \sum_{l=1}^{[\frac{n-1}{2}]}H_{2l+1}\wedge x_p]_D\in
H_D^{p+q+1}(K_{p+1}),$$ where $x$ is given in the proof of (i). Note
that
\begin{equation}\label{3.11}
\xymatrix{
&H_D^{p+q+1}(K_{p+1})&\ar[l]_{i_1^2} H_D^{p+q+1}(K_{p+3})\ar[r]^{j_1}&H_D^{p+q+1}(K_{p+3}/K_{p+4})\\
&[Dx]_D\ar[r]^{(i_1^{-1})^2}&[Dx]_D\ar[r]^{j_1}
& H_3\wedge x_p. \\
}
\end{equation}
It follows that
\begin{equation}\label{3.12}
\begin{array}{lll}
d_3[x_p]_3 & =& j_3k_3[x_p]_3\\

&=&j_3(k_1x_p)\\

& =& j_3[Dx]_D \\

& =& [j_1((i_1^{-1})^2[Dx]_D)]_3\\

&=& [H_3\wedge x_p]_3,
\end{array}
\end{equation}
 where the first,
second and fourth identities follow from the definitions of $d_3$,
$k_3$ and $j_3$ respectively, and the third and the last identities
follow from (\ref{3.10}) and (\ref{3.11}). By (ii), $d_2=0$, so
$E_3^{p,q}=E_2^{p,q}.$ Then we have
$$d_3[x_p]=[H_3\wedge x_p].$$
\end{proof}

\begin{corollary}\label{4.6}
$d_{2k}=0$ for $k\geq 1.$ Therefore, for $k\geq 1$,
 \begin{equation} \label{4.7}
E_{2k+1}^{p,q}=E_{2k}^{p,q}.
\end{equation}
\end{corollary}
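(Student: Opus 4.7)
The plan is to exploit the parity of the bidegree of $d_{2k}$ against the checkerboard vanishing established on $E_2$. The whole argument is essentially bookkeeping on parities of $q$; there is no genuine obstacle.

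First, I would record the following vanishing on every page: by Proposition~\ref{e3}(ii), $E_2^{p,q}=0$ whenever $q$ is odd, and since each $E_r^{p,q}$ with $r\geq 2$ is a subquotient of $E_2^{p,q}$, one gets
\begin{equation*}
E_r^{p,q}=0 \quad \text{for all } r\geq 2 \text{ and all odd } q.
\end{equation*}

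Next, I would look at the bidegree: $d_{2k}: E_{2k}^{p,q}\to E_{2k}^{p+2k,\,q-2k+1}$ has bidegree $(2k,1-2k)$, and the shift $1-2k$ in the $q$-coordinate is odd. Hence $d_{2k}$ changes the parity of $q$. For any fixed $(p,q)$, either $q$ is odd, in which case the domain $E_{2k}^{p,q}$ vanishes by the previous step, or $q$ is even, in which case $q-2k+1$ is odd and the codomain $E_{2k}^{p+2k,\,q-2k+1}$ vanishes. Either way, $d_{2k}=0$ on $E_{2k}^{p,q}$.

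Finally, because $d_{2k}=0$ both entering and leaving $E_{2k}^{p,q}$, the definition of the next page gives
\begin{equation*}
E_{2k+1}^{p,q}=\ker(d_{2k})/\mathrm{im}(d_{2k})=E_{2k}^{p,q},
\end{equation*}
which is the desired conclusion. The base case $k=1$ already appears in Proposition~\ref{e3}(ii), and the argument above handles the general $k\geq 1$ uniformly; no induction or further input is required.
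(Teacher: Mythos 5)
Your proof is correct and follows essentially the same route as the paper: the checkerboard vanishing $E_{2k}^{p,q}=0$ for odd $q$ (inherited from $E_2$), combined with the odd shift $1-2k$ in the $q$-degree of $d_{2k}$, forces either the source or the target of $d_{2k}$ to vanish. Your write-up just makes the parity bookkeeping slightly more explicit than the paper's "by degree reasons."
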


\begin{proof} Note that $d_{2k}:E_{2k}^{p,q}\longrightarrow
E_{2k}^{p+2k,q+1-2k}.$ By Proposition~\ref{e3} (ii), if $q$ is odd,
then $E_{2}^{p,q}=0$ which implies that $E_{2k}^{p,q}=0.$ By degree
reasons, we have $d_{2k}=0$ and $E_{2k+1}^{p,q}=E_{2k}^{p,q}$ for
$k\geq 1.$ \end{proof}

The differential $d_3$ for the case in which $H=H_3$ is shown in
\cite[\S 6]{A-S}, and the $E_2^{p,q}$-term is also known.

\section{Differentials $d_{2t+3}$ $(t\geq 1)$ in terms of cup products}

In this section, we will show that the differentials $d_{2t+3}$
$(t\geq 1)$ can be given in terms of cup products.

We first consider the general case that
$H=\sum_{i=1}^{[\frac{n-1}{2}]}H_{2i+1}$. For $[x_p]_{2t+3}\in
E_{2t+3}^{p,q}$, we let $x=\sum_{j=0}^{[\frac{n-p}{2}]}x_{p+2j}\in
F_p({\Omega}^*(M))$. Then we have
\begin{equation}\label{6.6}
\begin{array}{ll}
Dx&=(d+\sum\limits_{i= 1}^{[\frac{n-1}{2}]}H_{2i+1})(\sum\limits_{j=0}^{[\frac{n-p}{2}]}x_{p+2j})\\

&=dx_p+\sum\limits_{j=
0}^{[\frac{n-p}{2}]-1}(dx_{p+2j+2}+\sum\limits_{i=1}^{j+1}H_{2i+1}\wedge
x_{p+2(j-i)+2}).
\end{array}
\end{equation}
Denote $y=Dx =\sum_{j=0}^{[\frac{n-p}{2}]}y_{p+2j+1}$, where
\begin{equation}\label{6.7}
\left\{
\begin{array}{ll}
&y_{p+1}=dx_p, \\

&y_{p+2j+3}=dx_{p+2j+2}+\sum\limits_{i=1}^{j+1} H_{2i+1}\wedge
x_{p+2(j-i)+2}\quad (0\leq j\leq
[\frac{n-p}{2}]-1).\\
\end{array}\right.
\end{equation}

\begin{theorem}\label{4.1}
For $[x_p]_{2t+3}\in E_{2t+3}^{p,q}$ $(t\geq1)$, there exist
$x_{p+2i}=x_{p+2i}^{(t)}$ $(1\leq i\leq t)$ such that $y_{p+2j+1}=0$
$(0\leq j\leq t)$ and
\begin{equation*}
d_{2t+3}[x_p]_{2t+3}=[\sum\limits_{i=1}^t
H_{2i+1}\wedge x_{p+2(t-i)+2}^{(t)}+H_{2t+3}\wedge x_p]_{2t+3},\\
\end{equation*}
where the $(p+2i)$-form $x_{p+2i}^{(t)}$ depends on $t$.

\end{theorem}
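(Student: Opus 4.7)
The plan is to proceed by induction on $t$. The underlying mechanism is standard for spectral sequences of filtered cochain complexes: by iterating the construction of the derived couple from~(\ref{3.5}), the class $d_{2t+3}[x_p]_{2t+3}$ is obtained by lifting $x_p$ to a cochain $x\in K_p$ with $Dx\in K_{p+2t+3}$ and reading off the image of $Dx$ in $K_{p+2t+3}/K_{p+2t+4}$. A lift of the specific form $x=x_p+\sum_{i=1}^{t}x_{p+2i}^{(t)}$ exists precisely when $y_{p+2j+1}=0$ for $0\le j\le t$, so the existence claim of the theorem amounts to inductively constructing the corrections $x_{p+2i}^{(t)}$ that successively kill these $y$'s, and the formula for $d_{2t+3}$ will then fall out from the first nonvanishing component of $Dx^{(t)}$.

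For the base case $t=1$, applying Proposition~\ref{e3} to $[x_p]_5\in E_5^{p,q}$ gives $dx_p=0$ (so $y_{p+1}=0$) and $d_3[x_p]_3=[H_3\wedge x_p]_3=0$ in $E_3^{p+3,q-2}\cong H^{p+3}(M)$. The latter is honest $d$-exactness, so I can choose $x_{p+2}^{(1)}$ with $dx_{p+2}^{(1)}+H_3\wedge x_p=0$, killing $y_{p+3}$; one further step of the zig-zag~(\ref{3.11})--(\ref{3.12}) then identifies $d_5[x_p]_5=[H_3\wedge x_{p+2}^{(1)}+H_5\wedge x_p]_5$.

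For the inductive step, assume the theorem for $t-1$. Since $[x_p]_{2t+3}$ also represents a class at page $2t+1$, the induction hypothesis furnishes forms $x_{p+2i}^{(t-1)}$ ($1\le i\le t-1$) making $y_{p+2j+1}=0$ for $0\le j\le t-1$, together with
\[
d_{2t+1}[x_p]_{2t+1}=\Bigl[\sum_{i=1}^{t-1}H_{2i+1}\wedge x_{p+2(t-1-i)+2}^{(t-1)}+H_{2t+1}\wedge x_p\Bigr]_{2t+1}.
\]
The hypothesis $[x_p]_{2t+3}\in E_{2t+3}$ forces this class to vanish in $E_{2t+1}^{p+2t+1,q-2t}$, and the remaining task is to produce $x_{p+2t}^{(t)}$ (possibly after reinterpreting the earlier $x_{p+2i}^{(t-1)}$'s as $x_{p+2i}^{(t)}$ via a controlled adjustment) satisfying $dx_{p+2t}^{(t)}+\sum_{i=1}^{t}H_{2i+1}\wedge x_{p+2(t-i)}^{(t)}=0$, i.e., $y_{p+2t+1}=0$.

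The hard part is exactly this passage from abstract vanishing in the subquotient $E_{2t+1}^{p+2t+1,q-2t}\subset H^{p+2t+1}(M)$ to genuine $d$-exactness of a representing form. The most natural way to handle it is to invoke the general principle that in the spectral sequence of a bounded filtered cochain complex, survival of a class to the next page is equivalent to the existence of a lift whose image under the differential drops one more filtration step; applied here (using $d_{2t+2}=0$) the survival of $[x_p]_{2t+3}$ furnishes exactly a lift $x\in K_p$ with $Dx\in K_{p+2t+3}$, which delivers the $x_{p+2i}^{(t)}$ all at once. Alternatively I would argue by hand: modifications of the $x_{p+2i}^{(t-1)}$'s by $d$-closed corrections preserve the vanishing of $y_{p+2j+1}$ for $j\le t-1$ while shifting the bracketed form precisely by expressions representing the images of the incoming differentials $d_3,\dots,d_{2t-1}$, so the vanishing on page $2t+1$ translates into the required exactness after modification. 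Once $x^{(t)}:=x_p+\sum_{i=1}^{t}x_{p+2i}^{(t)}$ is fixed and we set $x_{p+2t+2}^{(t)}:=0$, the $(p+2t+3)$-component of $Dx^{(t)}$ is $\sum_{i=1}^{t}H_{2i+1}\wedge x_{p+2(t-i)+2}^{(t)}+H_{2t+3}\wedge x_p$, and one more iteration of the zig-zag~(\ref{3.11})--(\ref{3.12}) identifies its class as $d_{2t+3}[x_p]_{2t+3}$.
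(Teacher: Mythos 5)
Your primary route is correct but genuinely different from the paper's. You propose to get the corrections $x_{p+2i}^{(t)}$ "all at once" from the standard characterization of the cycle groups of a filtered complex: $x_p$ survives to $E_{2t+3}^{p,q}$ iff it admits a lift $x=x_p+\sum_i x_{p+2i}$ with $Dx\in K_{p+2t+3}$, i.e.\ with $y_{p+2j+1}=0$ for $0\le j\le t$; the formula for $d_{2t+3}$ then drops out of the zig-zag exactly as you say. That fact is textbook (it is the description $Z_r^{p,q}=\delta^{-1}(i^{*\,r-1}H_D^{p+q+1}(K_{p+r}))$ that the paper itself recalls in Section 6), and once invoked it makes the induction on $t$ essentially unnecessary. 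The paper does not invoke it: it reproves this existence statement by hand, via an induction on $t$ whose inductive step is a long cascade --- since $d_{2t+1}[x_p]_{2t+1}=0$ only on page $2t+1$, the representing form is $d_{2t-1}[w_{p+2}]$ of some $w_{p+2}$ up to classes killed earlier, and one must apply the induction hypothesis again to $w_{p+2}$, subtract, descend to page $2t-1$, introduce $w_{p+4}$, and so on down to $w_{p+2m}$, finally resetting $x_{p+2i}^{(t)}=x_{p+2i}^{(t-1)}-\sum_j w_{p+2i}^{(t-1-j)}-w_{p+2i}$. Your appeal to the general principle buys a much shorter argument; the paper's version buys an explicit recursive construction of the $x_{p+2i}^{(t)}$, which it needs later to define the specific defining systems in Lemma 5.6.

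One caveat: your fallback "by hand" sketch is not right as stated. Modifying the $x_{p+2i}^{(t-1)}$ by $d$-closed forms does \emph{not} preserve the vanishing of the intermediate $y_{p+2j+1}$: a closed correction $c$ added to $x_{p+2i}$ changes $y_{p+2j+1}$ for $j>i$ by the cross-term $H_{2(j-i)+1}\wedge c$, which is generally nonzero. Repairing this is exactly what forces the paper's secondary corrections $w_{p+2i}^{(k)}$ at every level. So if you do not want to cite the general principle, you must carry out that full cascade rather than a one-step adjustment.
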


\begin{proof} The theorem is shown by mathematical
induction on $t$.

When $t=1$, $[x_p]_{2t+3}=[x_p]_5$. $[x_p]_5\in E_5^{p,q}$ implies
that $dx_p=0$ and $d_3[x_p]=[H_3\wedge x_p]=0$ by
Proposition~\ref{e3}. Thus there exists a $(p+2)$-form $v_1$ such
that $H_3\wedge x_p=d(-v_1).$ We can choose $x_{p+2}^{(1)}=v_1$ to
get $y_{p+3}=dx_{p+2}^{(1)}+H_3\wedge x_p=dv_1+H_3\wedge x_p=0$ from
(\ref{6.7}). Note that
\begin{equation}\label{6.2}
\xymatrix{
&H_D^{p+q+1}(K_{p+1})&\ar[l]_{i_1^4} H_D^{p+q+1}(K_{p+5})\ar[r]^{j_1}&H_D^{p+q+1}(K_{p+5}/K_{p+6})\\
&[Dx]_D\ar[r]^{(i_1^{-1})^4}&[Dx]_D\ar[r]^{j_1}
& y_{p+5}, \\
}
\end{equation}
we obtain
\begin{equation}\label{6.3}
\begin{array}{ll}
d_5[x_p]_5&=j_5k_5[x_p]_5\\
&=j_5(k_1x_p)\\
&=j_5[Dx]_D\\

&=[j_1(i_1^{-1})^4[Dx]_D]_5\\

&=[y_{p+5}]_5.
\end{array}
\end{equation}
The reasons for the identities in (\ref{6.3}) are similar to those
of (\ref{3.12}). Thus we have
\begin{equation*}
\begin{array}{ll}
d_5[x_p]_5&=[dx_{p+4}+H_3\wedge x_{p+2}^{(1)}+H_5\wedge x_p]_5\\

&=[H_3\wedge x_{p+2}^{(1)}+H_5\wedge x_p]_5,
\end{array}
\end{equation*}
where the first identity follows from (\ref{6.3}) and the definition
of $y_{p+5}$ in (\ref{6.7}), and the second one follows from that
$dx_{p+4}$ vanishes in $E_5^{\ast,\ast}$. Hence the result holds for
$t=1$.

Suppose the result holds for $t\leq m-1$. Now we show that the
theorem also holds for $t=m$.

From $[x_p]_{2m+3}\in E_{2m+3}^{p,q}$, we have $[x_p]_{2m+1}\in
E_{2m+1}^{p,q}$ and $d_{2m+1}[x_p]_{2m+1}=0$. By induction, there
exist $x_{p+2i}^{(m-1)}$ ($1\leq i \leq m-1$) such that

\begin{equation}\label{4.8}
\left\{ \begin{array}{ll}&y^{(m-1)}_{p+1}(x_p)=dx_p=0,\\

&y^{(m-1)}_{p+3}(x_p) = dx_{p+2}^{(m-1)}+H_3\wedge x_p = 0,\\

&y^{(m-1)}_{p+2i+1}(x_p) =
dx_{p+2i}^{(m-1)}+\sum_{j=1}^{i-1}H_{2j+1}\wedge
x_{p+2(i-j)}^{(m-1)}+H_{2i+1}\wedge x_p\\

&\quad \quad \quad\quad\quad\ = 0~~ (2\leq i\leq m-1),\\

&d_{2m+1}[x_p]_{2m+1}=[\sum\limits_{i=1}^{m-1} H_{2i+1}\wedge
x_{p+2(m-i)}^{(m-1)}+H_{2m+1}\wedge x_p]_{2m+1}=0.
\end{array}\right.
\end{equation}

By $d_{2m}=0$ and the last equation in (\ref{4.8}), there exists a
$(p+2)$-form $w_{p+2}$ such that
\begin{equation}\label{4.3}
[\sum\limits_{i=1}^{m-1} H_{2i+1}\wedge
x_{p+2(m-i)}^{(m-1)}+H_{2m+1}\wedge
x_p]_{2m-1}=d_{2m-1}[w_{p+2}]_{2m-1}.
\end{equation}

By induction and $[w_{p+2}]_{2m-1}\in E_{2m-1}^{p+2,q-2}$, there
exist $w^{(m-2)}_{p+2(i+1)}$ $(1\leq i \leq m-2)$ such that

\begin{equation}\label{4.4}
\left\{ \begin{array}{ll}&y^{(m-2)}_{p+3}(w_{p+2})=dw_{p+2}=0,\\

&y^{(m-2)}_{p+5}(w_{p+2})=dw^{(m-2)}_{p+4}+H_3\wedge w_{p+2}=0,\\

&y^{(m-2)}_{p+2i+3}(w_{p+2}) =
dw^{(m-2)}_{p+2(i+1)}+\sum_{j=1}^{i-1}H_{2j+1}\wedge
w^{(m-2)}_{p+2(i-j+1)}+ H_{2i+1}\wedge w_{p+2}\\

 &\quad\quad\quad\quad\quad\quad\ \ = 0~~ (2\leq i\leq m-2),\\

&d_{2m-1}[w_{p+2}]_{2m-1}=[\sum\limits_{i=1}^{m-2} H_{2i+1}\wedge
w^{(m-2)}_{p+2(m-i)}+H_{2m-1}\wedge w_{p+2}]_{2m-1}.
\end{array}\right.
\end{equation}
By (\ref{4.3}) and the last equation in (\ref{4.4}), we obtain
$$[\sum\limits_{i=1}^{m-2} H_{2i+1}\wedge
(x_{p+2(m-i)}^{(m-1)}- w^{(m-2)}_{p+2(m-i)})+H_{2m-1}\wedge
(x_{p+2}^{(m-1)}-w_{p+2})+H_{2m+1}\wedge x_p]_{2m-1}=0.$$ Note that
$d_{2m-2}=0$, it follows that there exists a $(p+4)$-form $w_{p+4}$
such that
\begin{displaymath}
\begin{array}{ll}
 &[\sum\limits_{i=1}^{m-2} H_{2i+1}\wedge (x_{p+2(m-i)}^{(m-1)}-
w^{(m-2)}_{p+2(m-i)})+H_{2m-1}\wedge
(x_{p+2}^{(m-1)}-w_{p+2})+H_{2m+1}\wedge x_p]_{2m-3}\\

&=d_{2m-3}[w_{p+4}]_{2m-3}.
\end{array}
\end{displaymath}

Keeping the same iteration process as mentioned above, we have
\begin{equation*}
\begin{array}{ll}
&[\sum\limits_{i=1}^{2}(H_{2i+1}\wedge
(x_{p+2(m-i)}^{(m-1)}-\sum\limits_{j=1}^{m-3}w_{p+2(m-i)}^{(m-1-j)}))+\\

&\sum\limits_{i=3}^{m-1}(H_{2i+1}\wedge
(x_{p+2(m-i)}^{(m-1)}-\sum\limits_{j=1}^{
m-1-j}w_{p+2(m-i)}^{(m-1-j)}-w_{p+2(m-i)}))+H_{2m+1}\wedge x_p]_7=0.
\end{array}
\end{equation*}
By $d_6=0$, it follows that there exists a $(p+2(m-2))$-form
$w_{p+2(m-2)}$ such that
\begin{equation}\label{liu}
\begin{array}{ll}
&[\sum\limits_{i=1}^{2}(H_{2i+1}\wedge
(x_{p+2(m-i)}^{(m-1)}-\sum\limits_{j=1}^{m-3}w_{p+2(m-i)}^{(m-1-j)}))+
\sum\limits_{i=3}^{m-1}(H_{2i+1}\wedge
(x_{p+2(m-i)}^{(m-1)}-\\

&\sum\limits_{j=1}^{
m-i-1}w_{p+2(m-i)}^{(m-1-j)}-w_{p+2(m-i)}))+H_{2m+1}\wedge
x_p]_5=d_5[w_{p+2(m-2)}]_5.
\end{array}
\end{equation}

By induction and $[w_{p+2(m-2)}]_5\in E_5^{p+2(m-2),q-2(m-2)}$,
there exists  $w_{p+2(m-1)}^{(1)}$ such that
\begin{equation}\label{liu2}
\left\{\begin{array}{ll} &y_{p+2m-3}^{(1)}(w_{p+2(m-2)})=dw_{p+2(m-2)}=0,\\

&y_{p+2m-1}^{(1)}(w_{p+2(m-2)})=dw_{p+2(m-1)}^{(1)}+H_3\wedge w_{p+2(m-2)}=0,\\

&d_5[w_{p+2(m-2)}]_5=[H_3\wedge w_{p+2(m-1)}^{(1)}+H_5\wedge
w_{p+2(m-2)}]_5.
\end{array}\right.
\end{equation}
By (\ref{liu}), the last equation in (\ref{liu2}) and $d_4=0$, it
follows that there exists a $(p+2(m-1))$-form $w_{p+2(m-1)}$ such
that
\begin{equation*}
\begin{array}{ll}
&[(H_3\wedge
(x_{p+2(m-1)}^{(m-1)}-\sum\limits_{j=1}^{m-2}w_{p+2(m-1)}^{(m-1-j)}))+
\sum\limits_{i=2}^{m-1}(H_{2i+1}\wedge (x_{p+ 2(m-i)}^{(m-1)}-\\

&\sum\limits_{j=1}^{m-i-1}w_{p+2(m-i)}^{(m-1-j)}-w_{p+2(m-i)}))+H_{2m+1}\wedge
x_p]=d_3[w_{p+2(m-1)}]=[H_3\wedge w_{p+2(m-1)}]

\end{array}
\end{equation*}
and $y_{p+2m-1}^{(0)}(w_{p+2(m-1)})=dw_{p+2(m-1)}=0$. Thus there
exists a $(p+2m)$-form $w_{p+2m}$ such that
\begin{equation}\label{4.5}
\sum\limits_{i=1}^{m-1}(H_{2i+1}\wedge
(x_{p+2(m-i)}^{(m-1)}-\sum\limits_{j=1}^{m-i-1}w_{p+2(m-i)}^{(m-1-j)}-w_{p+2(m-i)}))+H_{2m+1}\wedge
x_p=dw_{p+2m}.
\end{equation}
By comparing (\ref{4.5}) with (\ref{6.7}), we choose at this time
\begin{equation}\label{liu3}
\left\{\begin{array}{ll} &x_{p+2}=x_{p+2}^{(m)}=x_{p+2}^{(m-1)}-w_{p+2},\\

&x_{p+2i}=x_{p+2i}^{(m)}=x_{p+2i}^{(m-1)}-\sum\limits_{j=1}^{i-1}w_{p+2i}^{(m-1-j)}-w_{p+2i}\quad
(2\leq i
\leq m-1),\\

&x_{p+2m}=x_{p+2m}^{(m)}=-w_{p+2m}.
\end{array}\right.
\end{equation}
From (\ref{6.7}), by a direct computation we have
\begin{equation}\label{liu4}
\left\{\begin{array}{ll} &y_{p+1}=y_{p+1}^{(m-1)}(x_p)=0,\\

&y_{p+2i-1}=y_{p+2i-1}^{(m-1)}(x_p)-\sum\limits_{j=1}^{i-1}y^{(m-1-j)}_{p+2i-1}(w_{p+2j})=0
\quad
(2\leq i \leq m),\\

&y_{p+2m+1}=0.
\end{array}\right.
\end{equation}
Note that
\begin{equation}\label{6.4}
\xymatrix{
&H_D^{p+q+1}(K_{p+1})&\ar[l]_{i_1^{2(m+1)}} H_D^{p+q+1}(K_{p+2m+3})\ar[r]^<<<<{j_1}&H_D^{p+q+1}(K_{p+2m+3}/K_{p+2m+4})\\
&[Dx]_D\ar[r]^{(i_1^{-1})^{2(m+1)}}&[Dx]_D\ar[r]^{j_1}
& y_{p+2m+3}. \\
}
\end{equation}
By the similar reasons as in (\ref{3.12}), the following identities
hold.
\begin{equation}\label{6.5}
\begin{array}{ll}
d_{2m+3}[x_p]_{2m+3}&=j_{2m+3}k_{2m+3}[x_p]_{2m+3}\\

&=j_{2m+3}(k_1x_p)\\

&=j_{2m+3}[Dx]_D\\

&=[j_1(i_1^{-1})^{2(m+1)}[Dx]_D]_{2m+3}\\

&=[y_{p+2m+3}]_{2m+3}.
\end{array}
\end{equation}
So we have
\begin{equation*}
\begin{array}{ll}
d_{2m+3}[x_p]_{2m+3}&=[y_{p+2m+3}]_{2m+3}\\

&=[dx_{p+2m+2}+\sum\limits_{i=1}^{m} H_{2i+1}\wedge
x_{p+2(m-i+1)}^{(m)}+H_{2m+3}\wedge x_p]_{2m+3} ~{\rm by ~(\ref{6.7}) }\\

&=[\sum\limits_{i=1}^{m} H_{2i+1}\wedge
x_{p+2(m-i+1)}^{(m)}+H_{2m+3}\wedge x_p]_{2m+3},
\end{array}
\end{equation*}
showing that the result also holds for $t=m$.

The proof of the theorem is finished.
\end{proof}

\begin{remark}
Note that $x_{p+2i}^{(t)}$ ($1\leq i\leq t$) depends on $t$, and
$x_{p+2i}^{(t_1)}\neq x_{p+2i}^{(t_2)}$ on the condition that
$t_1\neq t_2$ generally. $x_{p+2i}^{(t)}$ ($1\leq i\leq t$) are
related to $x_{p+2j}^{(t-1)}$ ($1\leq j\leq t-1$, $j\leq i$).

\end{remark}

Now we consider the special case in which $H=H_{2s+1}$ $(s\geq 1)$
only. For this special case, we will give a more explicit result
which is stronger than Theorem \ref{4.1}.

For $x=\sum_{j= 0}^{[\frac{n-p}{2}]}x_{p+2j}$, we have
\begin{equation*}
\begin{array}{ll}
Dx&=(d+H_{2s+1})(\sum_{j= 0}^{[\frac{n-p}{2}]}x_{p+2j})\\

&=\sum_{j=0}^{s-1}dx_{p+2j}+\sum_{j=s}^{[\frac{n-p}{2}]}(dx_{p+2j}+H_{2s+1}\wedge
x_{p+2(j-s)}).
\end{array}
\end{equation*}
Denote
\begin{equation}\label{6.6}
\left\{
\begin{array}{ll}
y_{p+2j+1}=dx_{p+2j}&(0\leq j\leq s-1),\\

y_{p+2j+3}=dx_{p+2j+2}+H_{2s+1}\wedge x_{p+2(j-s)+2}&(s-1\leq j\leq [\frac{n-p}{2}]-1).\\
\end{array}\right.
\end{equation}
Then $Dx=\sum_{j= 0}^{[\frac{n-p}{2}]}y_{p+2j+1}$.

\begin{theorem}\label{4.2}
For $H=H_{2s+1}$ $(s\geq 1)$ only and $[x_p]_{2t+3}\in
E_{2t+3}^{p,q}$ $(t\geq 1)$, there exist
$x_{p+2is}=x_{p+2is}^{([\frac{t}{s}])}$, $x_{p+2(i-1)s+2j}=0$ and
$x_{p+2[\frac{t}{s}]s+2k}=0$ for $1\leq i\leq [\frac{t}{s}],$ $1\leq
j\leq s-1$ and $1\leq k \leq t-[\frac{t}{s}]s$ such that
$y_{p+2u+1}=0$ $(0\leq u\leq t)$ and
\begin{equation*}
 d_{2t+3}[x_p]_{2t+3}=\left\{\begin{array}{lll}&[H_{2s+1}\wedge x_p]_{2s+1} \quad&t=s-1,\\

 &[H_{2s+1}\wedge x_{p+2(l-1)s}^{(l-1)}]_{2t+3} \quad &t=ls-1~~ (l\geq 2), \\
&0 \quad & otherwise,\end{array}\right.
\end{equation*}
where the $(p+2is)$-form $x_{p+2is}^{([\frac{t}{s}])}$ depends on
$[\frac{t}{s}]$.

\end{theorem}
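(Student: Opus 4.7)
The plan is to run the same induction on $t$ used in the proof of Theorem~\ref{4.1}, exploiting the crucial simplification that when $H=H_{2s+1}$ the recursion defining $y_{p+2u+1}$ couples only forms $x_{p+2j}$ whose indices $j$ belong to the same residue class modulo $s$. Consequently, once we set $x_{p+2k}=0$ for every $k\in\{1,\ldots,t\}$ that is not a multiple of $s$, the equations $y_{p+2u+1}=0$ at indices $u\not\equiv 0\pmod{s}$ are automatic, and the only genuine equations to solve are $dx_{p+2is}=-H_{2s+1}\wedge x_{p+2(i-1)s}$ for $1\leq i\leq [t/s]$.

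For the base case $t=1$ I would split on $s$. When $s=1$, Proposition~\ref{e3} shows that $d_3[x_p]_3=[H_3\wedge x_p]$ vanishes because $[x_p]_5$ exists, so one may pick $x_{p+2}^{(1)}$ with $dx_{p+2}^{(1)}=-H_3\wedge x_p$, whence $d_5[x_p]_5=[H_3\wedge x_{p+2}^{(1)}]_5$, matching the $l=2$ line of the formula. When $s=2$, setting $x_{p+2}=0$ yields $d_5[x_p]_5=[H_5\wedge x_p]_5$, matching the $t=s-1$ line. When $s\geq 3$, setting $x_{p+2}=x_{p+4}=0$ gives $d_5[x_p]_5=0$, which is the \emph{otherwise} line. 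For the inductive step at $t=m$, the hypothesis $[x_p]_{2m+3}\in E_{2m+3}^{p,q}$ forces $d_{2m+1}[x_p]_{2m+1}=0$. If $m$ is not a multiple of $s$, the level-$(m-1)$ defining system extends to level $m$ simply by appending $x_{p+2m}=0$: since $m-s$ is then also not a multiple of $s$, the equation $y_{p+2m+1}=dx_{p+2m}+H_{2s+1}\wedge x_{p+2(m-s)}=0$ holds automatically, and the formula for $d_{2m+3}[x_p]_{2m+3}$ reads off from the definition of $y_{p+2m+3}$ in (\ref{6.6}), giving $[H_{2s+1}\wedge x_{p+2(l-1)s}^{(l-1)}]_{2m+3}$ when $m+1=ls$ and $0$ otherwise.

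The main obstacle is the complementary sub-case $m=l's$, where a genuinely new form $x_{p+2l's}^{(l')}$ must be constructed. By the inductive hypothesis at $t=m-1=l's-1$ we have $d_{2m+1}[x_p]_{2m+1}=[H_{2s+1}\wedge x_{p+2(l'-1)s}^{(l'-1)}]_{2m+1}$ (or $[H_{2s+1}\wedge x_p]_{2m+1}$ when $l'=1$), and this class is zero in $E_{2m+1}^{p+2m+1,q-2m}$. To upgrade vanishing on the page $E_{2m+1}$ to genuine exactness of a representing cochain in $H^{\ast}(M)$, I would run the telescoping correction procedure from the proof of Theorem~\ref{4.1}, introducing auxiliary forms $w$. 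The decisive simplification is that, by the inductive hypothesis applied to smaller $t'<m-1$, the only odd-page differentials that can be non-trivial at the bidegrees met on the way back from $E_{2m+1}$ to $E_{2}$ are $d_{2ks+1}$ for $1\leq k\leq l'-1$; every other differential encountered is already known to vanish. The telescope therefore collapses to a sparse chain of corrections $w_{p+2ks}$ concentrated at multiples of $s$, with every intermediate $w$ taken to be zero. Absorbing these $w$'s into updated forms in the manner of (\ref{liu3}) yields the desired sparse defining system with $y_{p+2u+1}=0$ for $0\leq u\leq m$, and the announced differential formula then follows from the chain of identities (\ref{6.5}) specialized to the present situation.
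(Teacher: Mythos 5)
Your proposal is correct and follows essentially the same route as the paper: induction on $t$ with the case split governed by $t \bmod s$, setting the auxiliary forms to zero off the multiples of $s$ so that only the equations $dx_{p+2is}=-H_{2s+1}\wedge x_{p+2(i-1)s}$ remain, and invoking the telescoping $w$-correction of Theorem \ref{4.1} precisely at $t=l's$. The only differences are organizational: the paper packages this as an outer induction on $s$ (citing Theorem \ref{4.1} outright for $s=1$, where every step is your ``hard case'') and dispatches the $t=l's$ step with the terse phrase ``by the same method as in Theorem \ref{4.1},'' whereas you correctly spell out why that telescope collapses to corrections $w_{p+2ks}$ concentrated at multiples of $s$.
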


\begin{proof} The proof of the theorem is by mathematical induction
on $s$.

When $s=1$, the result follows from Theorem \ref{4.1}.

When $s\geq 2$, we prove the result by mathematical induction on
$t$. We first show that the result holds for $t=1$. Note that
$[x_p]_5\in E_5^{p,q}$ implies $y_{p+1}=dx_p=0.$ Choose $x_{p+2}=0$
and make $y_{p+3}=0$.

{\bf (i).}~~When $s=2$, by (\ref{6.3}) we have
\begin{equation*}
\begin{array}{ll}
d_5[x_p]_5&=[y_{p+5}]_5\\

&=[dx_{p+4}+H_5\wedge x_p]_5\\

&=[H_5\wedge x_p]_5.
\end{array}
\end{equation*}

{\bf (ii).}~~When $s\geq 3$, by (\ref{6.3}) we have
$$d_5[x_p]_5=[y_{p+5}]_5=[dx_{p+4}]_5=0.$$

Combining (i) and (ii), we have that the theorem holds for $t=1$.

Suppose the theorem holds for $t\leq m-1$. Now we show that the
theorem also holds for $t=m$.

{\bf Case 1.}~~$2\leq m\leq s-1$.

By induction, the theorem holds for $1\leq t\leq m-1$. Choose
$x_{p+2i}=0$ $(1\leq i\leq m)$, and from (\ref{6.6}) we easily get
that $y_{p+2j+1}=0$ $(0\leq j\leq m)$. By (\ref{6.5}) and
(\ref{6.6}), we have
\begin{equation*}
\begin{array}{ll}
d_{2m+3}[x_p]_{2m+3}&=[y_{p+2m+3}]_{2m+3}\\

&=\left\{\begin{array}{lll}[dx_{p+2(m+1)}]_{2m+3}&2\leq m\leq s-2,\\

[dx_{p+2(m+1)}+H_{2s+1}\wedge x_{p}]_{2m+3}& m=s-1,\end{array}\right.\\

&=\left\{\begin{array}{ll}0 &2\leq m\leq s-2,\\

[H_{2s+1}\wedge x_p]_{2s+1}&m=s-1.
\end{array}\right.
\end{array}
\end{equation*}

{\bf Case 2.}~~$m=ls-1$~~ $(l\geq 2)$.

By induction, the theorem holds for $t=m-1=ls-2$. Thus, there exist
$x_{p+2is}=x_{p+2is}^{([\frac{m-1}{s}])}=x_{p+2is}^{(l-1)}$,
$x_{p+2(i-1)s+2j}=0$ and $x_{p+2(l-1)s+2k}=0$ for $1\leq i\leq l-1,$
$1\leq j\leq s-1$ and $1\leq k \leq s-2$ such that $y_{p+2u+1}=0$
$(0\leq u\leq ls-2)$. Choose $x_{p+2(ls-1)}=0$, and by (\ref{6.6})
we get
$$\begin{array}{ll}
y_{p+2(ls-1)+1}&=dx_{p+2(ls-1)}+H_{2s+1}\wedge
x_{p+2(l-1)s-2}\\&=0+H_{2s+1}\wedge 0\\&=0.\end{array}$$ Then we
have
\begin{equation*}
\begin{array}{ll}
d_{2(ls-1)+3}[x_p]_{2(ls-1)+3}&=[y_{p+2ls+1}]_{2(ls-1)+3}\quad {\rm by~ (\ref{6.5})}\\

&=[dx_{p+2ls}+H_{2s+1} \wedge
x_{p+2(l-1)s}^{(l-1)}]_{2(ls-1)+3}\quad {\rm by~ (\ref{6.6})}\\

&=[H_{2s+1}\wedge x_{p+2(l-1)s}^{(l-1)}]_{2(ls-1)+3}.
\end{array}
\end{equation*}

{\bf Case 3.}~~$m=ls$ ~~($l\geq 1$).

By induction, there exist
$x_{p+2is}=x_{p+2is}^{([\frac{ls-1}{s}])}=x_{p+2is}^{(l-1)}$,
$x_{p+2(i-1)s+2j}=0$ and $x_{p+2(l-1)s+2k}=0$ for $1\leq i\leq l-1$,
$1\leq j\leq s-1$ and $1\leq k \leq s-1$ such that
 $y_{p+2u+1}=0$ $(0\leq
u\leq ls-1)$. By the same method as in Theorem \ref{4.1}, one has
that there exist $x_{p+2is}=x_{p+2is}^{(l)}$, $x_{p+2(i-1)s+2j}=0$
and $x_{p+2(l-1)s+2k}=0$ for $1\leq i\leq l$, $1\leq j\leq s-1$ and
$1\leq k \leq s-1$ such that $y_{p+2u+1}=0$ $(0\leq u\leq ls)$. By
(\ref{6.5}), (\ref{6.6}) and $x_{p+2ls-2s+2}=0$, we have
$$\begin{array}{ll}
d_{2ls+3}[x_p]_{2ls+3}&=[y_{p+2ls+3}]_{2ls+3}\\
&=[dx_{p+2ls+2}+H_{2s+1} \wedge x_{p+2ls-2s+2}]_{2ls+3}\\
&=0. \end{array}$$

{\bf Case 4.}~~$ls<m<(l+1)s-1$ ($l\geq 1$).

By induction, there exist
$x_{p+2is}=x_{p+2is}^{([\frac{m-1}{s}])}=x_{p+2is}^{(l)}$,
$x_{p+2(i-1)s+2j}=0$ and $x_{p+2ls+2k}=0$ for $1\leq i\leq l,$
$1\leq j\leq s-1$ and $1\leq k \leq m-ls-1$ such that
 $y_{p+2u+1}=0$ $(0\leq u\leq m-1)$. Choose $x_{p+2m}=0$
and make $y_{p+2m+1}=0$. By (\ref{6.5}), (\ref{6.6}) and
$x_{p+2m-2s+2}=0$, we have
$$\begin{array}{ll}d_{2m+3}[x_p]_{2m+3}&=[y_{p+2m+3}]_{2m+3}\\
&=[dx_{p+2m+2}+H_{2s+1} \wedge x_{p+2m-2s+2}]_{2m+3}\\&=0.
\end{array}$$

Combining Cases 1-4, we have that the result holds for $t=m$ and the
proof is completed. \end{proof}

\begin{remark}
\begin{enumerate}

\item

Theorems \ref{4.1} and \ref{4.2} show that the differentials in the
spectral sequence (\ref{ss}) can be computed in terms of cup
products with $H_{2i+1}$'s. The existence of $x_{p+2i}^{(t)}$'s and
$x_{p+2is}^{([\frac{t}{s}])}$'s in Theorems \ref{4.1} and \ref{4.2}
plays an essential role in proving Theorems \ref{differentials1} and
\ref{differentials2}, respectively. Theorems \ref{4.1} and \ref{4.2}
give a description of the differentials at the level of
$E_{2t+3}^{p,q}$ for the spectral sequence (\ref{ss}), which was
ignored in the previous studies of the twisted de Rham cohomology in
\cite{A-S, M-W}.

\item Note that Theorem \ref{4.2} is not a corollary of Theorem
\ref{4.1}, and it can not be obtained from Theorem \ref{4.1}
directly.

\end{enumerate}
\end{remark}

\section{Differentials $d_{2t+3}$ ($t\geq 1$) in terms of Massey products}

The Massey product is a cohomology operation of higher order
introduced in \cite{Ma2}, which generalizes the cup product. In
\cite{May}, May showed that the differentials in the Eilenberg-Moore
spectral sequence associated with the path-loop fibration of a path
connected, simply connected space are completely determined by
higher order Massey products. Kraines and Schochet \cite{K-S} also
described the differentials in Eilenberg-Moore spectral sequence by
Massey products. In order to describe the differentials $d_{2t+3}$
($t\geq 1$) in terms of Massey products, we first recall briefly the
definition of Massey products (see \cite{K, May, May2, M}). Then the
main theorems in this paper will be shown.

Because of different conventions in the literature used to define
Massey products, we present the following definitions. If $x\in
\Omega^{p}(M)$, the symbol $\bar{x}$ will denote $(-1)^{1+\hbox{deg}
x} x=(-1)^{1+p}x$. We first define the Massey triple product.

Let $x_1$, $x_2$, $x_3$ be closed differential forms on $M$ of
degrees $r_1$, $r_2$, $r_3$ with $[x_1][x_2]=0$ and $[x_2][x_3]=0,$
where [ \ ] denotes the de Rham cohomology class. Thus, there are
differential forms $v_1$ of degree $r_1+r_2-1$ and $v_2$ of degree
$r_2+r_3-1$ such that $dv_1=\bar{x}_1\wedge x_2$ and
$dv_2=\bar{x}_2\wedge x_3$. Define the $(r_1+r_2+r_3-1)$-form
\begin{equation}\label{5.00}
\omega=\bar{v}_1\wedge x_3+\bar{x}_1\wedge v_2.
\end{equation}
Then $\omega$ satisfies
\begin{equation*}
\begin{array}{ll}
 d(\omega)&=(-1)^{r_1+r_2}dv_1\wedge
x_3+(-1)^{r_1}\bar{x}_1\wedge dv_2\\
&=(-1)^{r_1+r_2}\bar{x}_1\wedge x_2\wedge
x_3+(-1)^{r_1+r_2+1}\bar{x}_1\wedge x_2\wedge x_3\\&=0.
\end{array}
\end{equation*}
Hence a set of all the cohomology classes $[\omega]$ obtained by the
above procedure is defined to be the {Massey triple product $\langle
x_1,x_2,x_3\rangle$} of $x_1, x_2$ and $x_3$. Due to the ambiguity
of $v_i, i=1,2$,  the Massey triple product $\langle
x_1,x_2,x_3\rangle$ is a representative of the quotient group
$$H^{r_1+r_2+r_3-1}(M)/([x_1]H^{r_2+r_3-1}(M)+H^{r_1+r_2-1}(M)[x_3]).$$

\begin{definition}\label{massey} Let $(\Omega^{\ast}(M),d)$ be de Rham complex, and $x_1$, $x_2$,
$\cdots$, $x_n$  closed differential forms on $M$ with $[x_i]\in
H^{r_i}(M)$. A collection of forms, $A = (a_{i,j})$, for $1\leq
i\leq j \leq k$ and $(i,j)\neq(1,n)$ is said to be a {defining
system for the $n$-fold Massey product $\langle
x_1,x_2,\cdots,x_n\rangle$} if
\begin{enumerate}
\item $a_{i,j}\in \Omega^{r_i+r_{i+1}+\cdots+r_j-j+i}(M),$

\item $a_{i,i}=x_i$ for $i=1,2,\cdots, k$,

\item $d(a_{i,j})=\sum
\limits_{r=i}^{j-1}\bar{a}_{i,r}\wedge a_{r+1,j}.$\end{enumerate}
The $(r_1+\cdots+r_n-n+2)$-dimensional cocycle, $c(A)$, defined by
\begin{equation}\label{5.02}c(A) = \sum\limits_{r=1}^{n-1}\bar{a}_{1,r}\wedge
a_{r+1,n}\in \Omega^{r_1+\cdots+r_n-n+2}(M)
\end{equation} is called the {
related cocycle of the defining system $A$}.
\end{definition}

\begin{remark}
There is a unique matrix associated to each defining system $A$ as
follows.
\begin{displaymath}
\left(
  \begin{array}{ccccccc}
    a_{1,1} & a_{1,2} & a_{1,3} & \cdots & a_{1,n-2} & a_{1,n-1} &  \\
      & a_{2,2} & a_{2,3} & \cdots & a_{2,n-2} & a_{2,n-1} & a_{2,n} \\
      &   & a_{3,3} & \cdots & a_{3,n-2} & a_{3,n-1} & a_{3,n} \\
      &   &   & \ddots & \vdots & \vdots & \vdots \\
      &   &   &   & a_{n-2,n-2} & a_{n-2,n-1} & a_{n-2,n} \\
      &   &   &   &   & a_{n-1,n-1} & a_{n-1,n} \\
      &   &   &   &   &   & a_{n,n} \\
  \end{array}
\right)_{n\times n.} \end{displaymath}
\end{remark}

\begin{definition}\label{Massey2}
The $n$-fold Massey product $\langle x_1,x_2,\cdots,x_n\rangle$ is
said to be defined if there is a defining system for it. If it is
defined, then $\langle x_1,x_2,\cdots,x_n\rangle$ consists of all
classes $w\in H^{r_1+r_2+\cdots +r_n-n+2}(M)$ for which there exists
a defining system $A$ such that $c(A)$ represents $w$.
\end{definition}

\begin{remark}
There is an inherent ambiguity in the definition of the Massey
product arising from the choices of defining systems. In general,
the $n$-fold Massey product may or may not be a coset of a subgroup,
but its indeterminacy is a subset of a matrix Massey product (see
\cite[\S 2]{May}).
\end{remark}

Based on Theorems \ref{4.1} and \ref{4.2}, we have the following
lemma on defining systems for the two Massey products we consider in
this paper.

\begin{lemma}\label{ds}

{\rm (1)} For $[x_p]_{2t+3}\in E_{2t+3}^{p,q}$ ($t\geq 1$), there
are defining systems for $\langle
\underbrace{H_3,\cdots,H_3}\limits_{t+1},x_p\rangle$ obtained from
Theorem \ref{4.1}.

{\rm (2)} For $[x_p]_{2t+3}\in E_{2t+3}^{p,q}$, when $t=ls-1$
($l\geq 2$) there are defining systems for $\langle
\underbrace{H_{2s+1},\cdots,H_{2s+1}}\limits_{l},x_p\rangle$
obtained from Theorem \ref{4.2} .
\end{lemma}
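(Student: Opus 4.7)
The plan is to prove both parts by constructing an explicit defining system in each case, starting from the auxiliary forms supplied by Theorems~\ref{4.1} and~\ref{4.2}. The structural fact that drives the construction is that every $H_{2k+1}$ has odd degree, so $H_{2a+1}\wedge H_{2b+1}=-H_{2b+1}\wedge H_{2a+1}$ and, in particular, $H_{2k+1}\wedge H_{2k+1}=0$.

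For part~(2), with $H=H_{2s+1}$ only and $t=ls-1$, I would set $b_{i,i}=H_{2s+1}$ for $1\le i\le l$, $b_{l+1,l+1}=x_p$, take $b_{i,j}=0$ on every interior slot $1\le i<j\le l$, and place $b_{i,l+1}=(-1)^{l+1-i}\,x^{(l-1)}_{p+2(l+1-i)s}$ in the last column. The interior defining equations hold automatically because every product $\bar{b}_{i,r}\wedge b_{r+1,j}$ either contains a zero interior entry or is the square $H_{2s+1}\wedge H_{2s+1}=0$; the last-column equations collapse, via the relation $dx^{(l-1)}_{p+2is}=-H_{2s+1}\wedge x^{(l-1)}_{p+2(i-1)s}$ supplied by Theorem~\ref{4.2}, to a first-order sign recursion whose unique solution is the exponents $(-1)^{l+1-i}$ above.

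For part~(1), with general $H$, the interior is no longer trivially zero. I would set $a_{i,i}=H_3$ for $1\le i\le t+1$, $a_{t+2,t+2}=x_p$, take $a_{i,j}=(-1)^{j-i}H_{2(j-i)+3}$ on every interior slot $1\le i<j\le t+1$ (with the convention $H_{2m+1}=0$ when $m$ falls outside the summation range of $H$), and place $a_{i,t+2}=(-1)^{t+2-i}\,x^{(t)}_{p+2(t+2-i)}$ in the last column. Each interior $a_{i,j}$ is closed, so the defining equation reduces to verifying that a signed sum of products of the shape $H_{2r+3}\wedge H_{2(j-i-r-1)+3}$ vanishes; it does, by pairing the symmetric terms $(r,j-i-r-1)$ via anticommutativity of odd-degree forms, with the self-paired middle term (present when $j-i$ is odd) killed by $H_{2r+3}\wedge H_{2r+3}=0$. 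Each last-column equation, after substitution, is precisely the iterated relation $dx^{(t)}_{p+2m}+\sum_{j=1}^{m-1}H_{2j+1}\wedge x^{(t)}_{p+2(m-j)}+H_{2m+1}\wedge x_p=0$ (with $m=t+2-i$) built into the inductive construction of Theorem~\ref{4.1}.

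The principal obstacle is the sign bookkeeping. Using the convention $\bar{x}=(-1)^{1+\deg x}x$, the interior exponents $(-1)^{j-i}$ and the last-column exponents $(-1)^{t+2-i}$ must conspire so that every identity $d(a_{i,j})=\sum_r \bar{a}_{i,r}\wedge a_{r+1,j}$ holds on the nose. Once the pattern is checked in small cases (for instance $t=1,2,3$), the inductions on $j-i$ for the interior and on $t+2-i$ for the last column are purely formal, and the analogous verification for part~(2) is a simpler special case. Beyond Theorems~\ref{4.1} and~\ref{4.2} and the graded commutativity of the wedge product, no further input is required.
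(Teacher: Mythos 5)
Your defining systems are exactly the ones the paper writes down (its $a_{i,i+k}=(-1)^kH_{2k+3}$ is your $a_{i,j}=(-1)^{j-i}H_{2(j-i)+3}$, and the last-column and part-(2) entries coincide), so this is the same proof; the only difference is that you actually sketch the verification of the defining-system identities, which the paper dismisses with ``the desired result follows.'' Your sign checks are correct: since each $H_{2k+1}$ is odd, $\bar{a}_{i,r}=a_{i,r}$, the interior sums cancel in symmetric pairs, and the last-column equations are precisely the relations $y_{p+2m+1}=0$ from Theorems \ref{4.1} and \ref{4.2}.
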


\begin{proof}
(1) From Theorem \ref{4.1}, there exist $x_{p+2j}^{(t)}$ $(1\leq
j\leq t)$ such that $y_{p+2i+1}=0$ $(0\leq i \leq t)$ and
$d_{2t+3}[x_p]_{2t+3}=[\sum\limits_{i=1}^t H_{2i+1}\wedge
x_{p+2(t-i+1)}^{(t)}+H_{2t+3}\wedge x_p]_{2t+3}.$ By Theorem
\ref{4.1} and (\ref{6.7}), there exists a defining system
$A=(a_{i,j})$ for $\langle
\underbrace{H_3,\cdots,H_3}\limits_{t+1},x_p\rangle$ as follows:
\begin{equation}\label{d1}
\left\{\begin{array}{lll}&a_{t+2,t+2}=x_p,\\

&a_{i,i+k}=(-1)^k H_{2k+3}~~&(1\leq i\leq t+1-k,~ 0\leq k < t),\\

&a_{i,t+2}=(-1)^{t+2-i}x_{p+2(t+2-i)}^{(t)}~~&(2\leq i\leq t+1),

\end{array}\right.
\end{equation} to which the matrix associated
is given by
\begin{equation}\label{big} \left(
  \begin{array}{ccccccc}
    H_3 & -H_5 & H_7 & \cdots & (-1)^{t-1}H_{2t+1} & (-1)^tH_{2t+3} &  \\
      & H_3 & -H_5 & \cdots & (-1)^{t-2}H_{2t-1} & (-1)^{t-1}H_{2t+1} & (-1)^tx_{p+2t}^{(t)} \\
      &   & H_3 & \cdots & (-1)^{t-3}H_{2t-3} & (-1)^{t-2}H_{2t-1} & (-1)^{t-1}x_{p+2t-2}^{(t)} \\
      &   &   & \ddots & \vdots & \vdots & \vdots \\
      &   &   &   & H_3 & -H_5 & (-1)^2x_{p+4}^{(t)} \\
      &   &   &   &   & H_3 & -x_{p+2}^{(t)} \\
      &   &   &   &   &   & x_p \\
  \end{array}
\right)_{(t+2)\times (t+2).} \end{equation} The desired result
follows.

(2) By Theorem \ref{4.2}, there exist $x_{p+2is}=x_{p+2is}^{(l-1)}$,
$x_{p+2(i-1)s+2j}=0$ and $x_{p+2(l-1)s+2k}=0$ for $1\leq i\leq l-1,$
$1\leq j\leq s-1$ and $1\leq k \leq s-1$ such that $y_{p+2i+1}=0$
($0\leq i \leq t$) and $d_{2t+3}[x_p]_{2t+3}=[H_{2s+1}\wedge
x_{p+2(l-1)s}^{(l-1)}]_{2t+3}.$ By Theorem \ref{4.2} and
(\ref{6.6}), there also exists a defining system $A=(a_{i,j})$ for
$\langle
\underbrace{H_{2s+1},\cdots,H_{2s+1}}\limits_{l},x_p\rangle$ as
follows:
\begin{equation}\label{d2}
\left\{\begin{array}{lll}&a_{i,j}=0~~&(1\leq i<j\leq l),\\

&a_{i,i}=H_{2s+1}~~&(1\leq i\leq l),\\

&a_{l+1,l+1}=x_p,\\

&a_{i,l+1}=(-1)^{l+1-i}x_{p+2(l+1-i)s}^{(l-1)}~~&(2\leq i\leq l),

\end{array}\right.
\end{equation}
to which the matrix associated is given by
\begin{equation}\label{small}
\left(
  \begin{array}{ccccccc}
    H_{2s+1} & 0 & 0 & \cdots & 0 & 0 &  \\
      & H_{2s+1} & 0 & \cdots & 0 & 0 & (-1)^{l-1}x_{p+2(l-1)s}^{(l-1)} \\
      &   & H_{2s+1} & \cdots & 0 & 0 & (-1)^{l-2}x_{p+2(l-2)s}^{(l-1)} \\
      &   &   & \ddots & \vdots & \vdots & \vdots \\
      &   &   &   & H_{2s+1} & 0 & (-1)^2x_{p+4s}^{(l-1)} \\
      &   &   &   &   & H_{2s+1} & (-1)x_{p+2s}^{(l-1)} \\
      &   &   &   &   &   & x_p \\
  \end{array}
\right)_{(l+1)\times (l+1).}
\end{equation}
The desired result follows.

\end{proof}

To obtain our desired theorems by specific elements of Massey
products, we restrict the allowable choices of defining systems for
the two Massey products in Lemma \ref{ds} (cf. \cite{sh}). By Lemma
\ref{ds}, we give the following definitions.

\begin{definition}\label{Massey1}

(1) Given a class $[x_p]_{2t+3}\in E_{2t+3}^{p,q}$ $(t\geq 1)$, a
{specific element of $(t+2)$-fold Massey product $\langle
\underbrace{H_3,\cdots,H_3}\limits_{t+1},x_p\rangle$}, denoted by
$\langle \underbrace{H_3,\cdots,H_3}\limits_{t+1},x_p\rangle_{A}$,
is a class in $H^{p+2t+3}(M)$ represent by $c(A)$, where $A$ is a
defining system obtained from Theorems \ref{4.1}. We define the
{$(t+2)$-fold allowable Massey product $\langle
\underbrace{H_3,\cdots,H_3}\limits_{t+1},x_p\rangle_{\star}$} to be
the set of all the cohomology classes $w\in H^{p+2t+3}(M)$ for which
there exists a defining system $A$ obtained from Theorem \ref{4.1}
such that $c(A)$ represents $w$.

(2) Similarly, given a class $[x_p]_{2t+3}\in E_{2t+3}^{p,q}$
$(t\geq 1)$, when $t=ls-1$ ($l\geq 2$) we define the {specific
element of $(l+1)$-fold Massey product $\langle
\underbrace{H_{2s+1},\cdots,H_{2s+1}}\limits_{l},x_p\rangle$} and
the {$(l+1)$-fold allowable Massey product} $\langle
\underbrace{H_{2s+1},\cdots,H_{2s+1}}\limits_{l},x_p\rangle_{\star}$
by replacing Theorem \ref{4.1} by Theorem \ref{4.2} in (1).
\end{definition}

\begin{remark}\label{lxa}

(1) From Definition \ref{Massey1}, we can get the following
$$\begin{array}{l}
\langle
\underbrace{H_3,\cdots,H_3}\limits_{t+1},x_p\rangle_{\star}\subseteq
\langle \underbrace{H_3,\cdots,H_3}\limits_{t+1},x_p\rangle, \langle
\underbrace{H_3,\cdots,H_3}\limits_{t+1},x_p\rangle_{\star}\subseteq\langle
\underbrace{H_3,\cdots,H_3}\limits_{t+1},x_p\rangle.
\end{array}
$$

(2) The allowable Massey product $\langle
\underbrace{H_3,\cdots,H_3}\limits_{t+1},x_p\rangle_{\star}$ is less
ambiguous than the general Massey product $\langle
\underbrace{H_3,\cdots,H_3}\limits_{t+1},x_p\rangle$. Take $\langle
H_3,H_3,x_p \rangle_{\star}$ in Definition \ref{Massey1} for
 example. Suppose $H=\sum_{i=1}^{[\frac{n-1}{2}]}H_{2i+1}$.
By Theorem \ref{4.1} and (\ref{6.7}), there exist $x_{p+2j}^{(1)}$
such that $y_{p+2i+1}=0\quad (0\leq i \leq 1)$ and
$d_{5}[x_p]_5=[H_{3}\wedge x_{p+2}^{(1)}+H_{5}\wedge x_p]_5.$ By
Lemma \ref{ds}, we get a defining system $A$ for $\langle
H_3,H_3,x_p\rangle$ and its related cocycle $c(A)=-H_{3}\wedge
x_{p+2}^{(1)}-H_{5}\wedge x_p.$ Thus, we have
\begin{equation}\label{333}\langle H_3,H_3,x_p\rangle_{A}=[-H_{3}\wedge
x_{p+2}^{(1)}-H_{5}\wedge x_p]. \end{equation} Obviously, the
indeterminacy of the allowable Massey product $\langle
H_3,H_3,x_p\rangle_{\star}$ is $[H_3]H^{p+2}(M)$. However, in the
general case, the indeterminacy of the Massey product $\langle
H_3,H_3,x_p\rangle$ is $[H_3]H^{p+2}(M)+H^5(M)[x_p]$.

Similarly, the allowable Massey product $\langle
\underbrace{H_{2s+1},\cdots,H_{2s+1}}\limits_{l},x_p\rangle_{\star}$
is less ambiguous than the general Massey product $\langle
\underbrace{H_{2s+1},\cdots,H_{2s+1}}\limits_{l},x_p\rangle$.
\end{remark}

Now we begin to show our main theorems.

\begin{theorem}\label{differentials1}
For $H=\sum_{i=1}^{[\frac{n-1}{2}]}H_{2i+1}$ and $[x_p]_{2t+3}\in
E_{2t+3}^{p,q}$, the differential of the spectral sequence
(\ref{ss}) $d_{2t+3}: E_{2t+3}^{p, q}\to E_{2t+3}^{p+2t+3, q-2t-2}$
is given by
\begin{equation*}
d_{2t+3}[x_p]_{2t+3}=(-1)^t[\langle
\underbrace{H_3,\cdots,H_3}\limits_{t+1},x_p\rangle_{A}]_{2t+3},
\end{equation*}
and $[\langle
\underbrace{H_3,\cdots,H_3}\limits_{t+1},x_p\rangle_{A}]_{2t+3}$ is
independent of the choice of the defining system $A$ obtained from
Theorem \ref{4.1}.
\end{theorem}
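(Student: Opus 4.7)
The plan is to compute the related cocycle $c(A)$ of the defining system $A$ from Lemma \ref{ds}(1) directly, and recognise it, up to an overall sign $(-1)^t$, as a representative of $d_{2t+3}[x_p]_{2t+3}$ as produced by Theorem \ref{4.1}. By Definition \ref{massey} applied with $n=t+2$, only the first row and last column of the matrix (\ref{big}) enter into
\[c(A)=\sum_{r=1}^{t+1}\bar{a}_{1,r}\wedge a_{r+1,t+2}.\]
Each $a_{1,r}$ is an odd-degree closed form---equal to $(-1)^{r-1}H_{2r+1}$ for $1\le r\le t$ and to $(-1)^t H_{2t+3}$ for $r=t+1$---so the convention $\bar{x}=(-1)^{1+\deg x}x$ gives $\bar{a}_{1,r}=a_{1,r}$. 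Combining with $a_{r+1,t+2}=(-1)^{t+1-r}x^{(t)}_{p+2(t+1-r)}$ for $1\le r\le t$ and $a_{t+2,t+2}=x_p$, the factors $(-1)^{r-1}\cdot(-1)^{t+1-r}=(-1)^{t}$ collapse to a single sign and yield
\[c(A)=(-1)^{t}\Bigl(\sum_{i=1}^{t}H_{2i+1}\wedge x^{(t)}_{p+2(t-i+1)}+H_{2t+3}\wedge x_p\Bigr).\]

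The next step is to invoke Theorem \ref{4.1}: the bracketed expression on the right is precisely a representative of $d_{2t+3}[x_p]_{2t+3}$. Passing to cohomology classes in $E_{2t+3}^{p+2t+3,q-2t-2}$ gives $[c(A)]_{2t+3}=(-1)^{t}\,d_{2t+3}[x_p]_{2t+3}$, and multiplying by $(-1)^t$ together with the assignment $[\langle H_3,\ldots,H_3,x_p\rangle_{A}]=[c(A)]$ from Definition \ref{Massey1} yields the desired equality
\[d_{2t+3}[x_p]_{2t+3}=(-1)^t[\langle H_3,\ldots,H_3,x_p\rangle_A]_{2t+3}.\]
For the independence claim, no separate argument is required: the differential $d_{2t+3}[x_p]_{2t+3}$ is intrinsically determined by the spectral sequence (\ref{ss}), so the equality above forces $[\langle H_3,\ldots,H_3,x_p\rangle_A]_{2t+3}$ to be the same cohomology class for every defining system $A$ produced by Theorem \ref{4.1}.

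The main obstacle, though essentially bookkeeping, is tracking three different sign conventions simultaneously---the alternating signs $(-1)^{r-1}$ in the top row of the matrix (\ref{big}), the signs $(-1)^{t+1-r}$ decorating the $x^{(t)}_{p+2(t+1-r)}$ entries in the last column, and the bar operator $\bar{x}=(-1)^{1+\deg x}x$ from Definition \ref{massey}---and verifying that they combine to produce the single overall factor $(-1)^t$. Once this is in hand, the proof is a direct translation between the concrete spectral-sequence expression of Theorem \ref{4.1} and the combinatorial definition of $c(A)$.
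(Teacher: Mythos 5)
Your proposal is correct and follows essentially the same route as the paper: compute $c(A)$ from the matrix (\ref{big}), observe that the sign factors $(-1)^{r-1}(-1)^{t+1-r}$ collapse to $(-1)^t$ (with $\bar{a}_{1,r}=a_{1,r}$ since $\deg a_{1,r}=2r+1$ is odd), identify the bracketed sum with the cup-product expression of Theorem \ref{4.1}, and deduce independence of $A$ from the fact that $d_{2t+3}$ is intrinsic. Your sign bookkeeping is in fact more explicit than the paper's, which simply states the value of $c(A)$.
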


\begin{proof}
By Lemma \ref{ds} (1), there exist defining systems for $\langle
\underbrace{H_3,\cdots,H_3}\limits_{t+1},x_p\rangle$ given by
Theorem \ref{4.1}. For any defining system $A=(a_{i,j})$ given by
Theorem \ref{4.1}, by (\ref{big}) we have
$$c(A)=(-1)^t(\sum\limits_{i=1}^t H_{2i+1}\wedge
x_{p+2(t-i+1)}^{(t)}+H_{2t+3}\wedge x_p).$$ By Definition
\ref{Massey1}, we have \begin{equation}\label{1234} \langle
\underbrace{H_3,\cdots,H_3}\limits_{t+1},x_p\rangle_{A}=[c(A)].
\end{equation}
Then by Theorem \ref{4.1}, we have $$\begin{array}{ll}
d_{2t+3}[x_p]_{2t+3}&=[\sum\limits_{i=1}^t H_{2i+1}\wedge
x_{p+2(t-i+1)}^{(t)}+H_{2t+3}\wedge x_p]_{2t+3}\\
&=(-1)^t [\langle
\underbrace{H_3,\cdots,H_3}\limits_{t+1},x_p\rangle_{A}]_{2t+3}.
\end{array}$$
Thus, we have $d_{2t+3}[x_p]_{2t+3}=(-1)^t[\langle
\underbrace{H_3,\cdots,H_3}\limits_{t+1},x_p\rangle_{A}]_{2t+3}.$

By the arbitrariness of $A$, we have $[\langle
\underbrace{H_3,\cdots,H_3}\limits_{t+1},x_p\rangle_{A}]_{2t+3}$ is
independent of the choice of the defining system $A$ obtained from
Theorem \ref{4.1}.
\end{proof}

\begin{example} \label{e2}
For formal manifolds which are manifolds with vanishing Massey
products, it is easy to get $$E_4^{p,q}\cong E_{\infty}^{p,q}$$ by
Theorem \ref{differentials1}. Note that simply connected compact
K\"{a}hler manifolds are an important class of formal manifolds (see
\cite{D-G-M-S}).
\end{example}

\begin{remark}\label{ljy}
(1) From the proof of the theorem above, we have that the specific
element $\langle
\underbrace{H_3,\cdots,H_3}\limits_{t+1},x_p\rangle_{A}$ represents
a class in $E_{2t+3}^{\ast,\ast}$. For two different defining
systems $A_1$ and $A_2$ given by Theorem \ref{4.1}, we have
$$\langle
\underbrace{H_3,\cdots,H_3}\limits_{t+1},x_p\rangle_{A_1}\not=\langle
\underbrace{H_3,\cdots,H_3}\limits_{t+1},x_p\rangle_{A_2}$$
generally. However, in the spectral sequence (\ref{ss}) we have
$$[\langle
\underbrace{H_3,\cdots,H_3}\limits_{t+1},x_p\rangle_{A_1}]_{2t+3}=[\langle
\underbrace{H_3,\cdots,H_3}\limits_{t+1},x_p\rangle_{A_2}]_{2t+3}.$$

(2) Since the indeterminacy of $\langle
\underbrace{H_3,\cdots,H_3}\limits_{t+1},x_p\rangle_{\star}$ does
not affect our results, we will not analyze the indeterminacy of
Massey products in this paper.

(3) By Theorem \ref{differentials1}, $d_{2t+3}[x_p]_{2t+3}=
(-1)^{t}[\langle\underbrace{H_3,\cdots,H_3}\limits_{t+1},x_p\rangle_A]_{2t+3}$
for $t\geq 1$ which is expressed only by $H_3$ and $x_p$. From the
proof of Theorem \ref{differentials1}, we know that the expression
above conceals some information, because the other $H_{2i+1}$'s
affect the result implicitly.
\end{remark}

We have the following corollary (see \cite[Proposition 6.1]{A-S}).
\begin{corollary}\label{5.1}
For $H=H_3$ only and $[x_p]_{2t+3}\in E_{2t+3}^{p,q}$ ($t\geq 1$),
we have that in the spectral sequence (\ref{ss}),
$$d_{2t+3}[x_p]_{2t+3}=
(-1)^{t}[\langle\underbrace{H_3,\cdots,H_3}\limits_{t+1},x_p\rangle_A]_{2t+3},$$
and
$[\langle\underbrace{H_3,\cdots,H_3}\limits_{t+1},x_p\rangle_A]_{2t+3}$
is independent of the choice of the defining system $A$ obtained
from Theorem \ref{4.1}.

\end{corollary}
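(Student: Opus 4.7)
The plan is to deduce this corollary as a direct specialization of Theorem \ref{differentials1}, obtained by setting $H_{2i+1}=0$ for all $i \geq 2$. Note that the statement of Theorem \ref{differentials1} is already phrased in terms of a Massey product involving only copies of $H_3$ (even though the twisting form $H$ itself contains higher components), so no change of notation is required; only a verification that everything remains well-defined in the specialized setting.

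First I would observe that when $H=H_3$, the recursion (\ref{6.7}) in Theorem \ref{4.1} collapses to $y_{p+1}=dx_p$ and $y_{p+2j+3}=dx_{p+2j+2}+H_3\wedge x_{p+2j}$. The inductive construction of the forms $x_{p+2i}^{(t)}$ in Theorem \ref{4.1} goes through verbatim in this degenerate case, since at each stage the obstruction involves only $H_3\wedge(\text{previously chosen form})$, and vanishing of the class $d_{2k+1}[x_p]_{2k+1}=0$ still furnishes the primitive needed to continue. Consequently, the defining system exhibited by Lemma \ref{ds}(1) and displayed in matrix form in (\ref{big}) is still obtained; here every entry $(-1)^kH_{2k+3}$ with $k\geq 1$ becomes zero, leaving only the diagonal of $H_3$'s and the final column $(-1)^{t+2-i}x_{p+2(t+2-i)}^{(t)}$ together with $x_p$ in the corner.

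Next I would read off the conclusion. By the construction, the associated related cocycle reduces to $c(A)=(-1)^t H_3\wedge x_{p+2t}^{(t)}$, while Theorem \ref{4.1} gives the differential as $d_{2t+3}[x_p]_{2t+3}=[H_3\wedge x_{p+2t}^{(t)}]_{2t+3}$. Combining these identities exactly as in the proof of Theorem \ref{differentials1} yields
\begin{equation*}
d_{2t+3}[x_p]_{2t+3}=(-1)^t\bigl[\langle\underbrace{H_3,\cdots,H_3}_{t+1},x_p\rangle_{A}\bigr]_{2t+3},
\end{equation*}
as required, and independence of the defining system $A$ follows by the same argument as in Theorem \ref{differentials1}.

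The proof is essentially bookkeeping: there is no new obstacle because Theorem \ref{differentials1} does all the work. The only point worth checking carefully is that the specialization of Theorem \ref{4.1} to $H=H_3$ genuinely produces a defining system for the iterated Massey product $\langle H_3,\ldots,H_3,x_p\rangle$ in the sense of Definition \ref{massey}, which is immediate from the simplified matrix (\ref{big}).
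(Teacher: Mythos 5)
Your proposal is correct and matches the paper's treatment: the paper presents Corollary \ref{5.1} as an immediate specialization of Theorem \ref{differentials1} (with the degenerate defining system you describe appearing explicitly as the second matrix in Remark \ref{5.7}), and your computation of $c(A)=(-1)^tH_3\wedge x_{p+2t}^{(t)}$ together with the reduction of Theorem \ref{4.1} to $d_{2t+3}[x_p]_{2t+3}=[H_3\wedge x_{p+2t}^{(t)}]_{2t+3}$ is exactly the intended bookkeeping.
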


\begin{remark}\label{5.7}\begin{enumerate}\item Because the
definition of Massey products is different from the definition in
\cite{A-S}, the expression of differentials in Corollary \ref{5.1}
differs from the one in \cite[Proposition 6.1]{A-S}.

\item The two specific elements of $\langle\underbrace{H_3,\cdots,H_3}\limits_{t+1},x_p\rangle$ in
Theorem~\ref{differentials1} and Corollary~\ref{5.1} are completely
different, and equal $[c(A_1)]$ and $[c(A_2)]$ respectively, where
$c(A_i)$ $(i=1,2)$ are related cocycles of the defining systems
$A_i$ $(i=1,2)$ obtained from Theorem \ref{4.1}. The matrices
associated to the two defining systems are given by
\begin{displaymath} \left(
  \begin{array}{ccccccc}
    H_3 & -H_5 & H_7 & \cdots & (-1)^{t-1}H_{2t+1} & (-1)^tH_{2t+3} &  \\
      & H_3 & -H_5 & \cdots & (-1)^{t-2}H_{2t-1} & (-1)^{t-1}H_{2t+1} & (-1)^tx_{p+2t}^{(t)} \\
      &   & H_3 & \cdots & (-1)^{t-3}H_{2t-3} & (-1)^{t-2}H_{2t-1} & (-1)^{t-1}x_{p+2t-2}^{(t)} \\
      &   &   & \ddots & \vdots & \vdots & \vdots \\
      &   &   &   & H_3 & -H_5 & (-1)^2x_{p+4}^{(t)} \\
      &   &   &   &   & H_3 & (-1)x_{p+2}^{(t)} \\
      &   &   &   &   &   & x_p \\
  \end{array}
\right)_{(t+2)\times (t+2)}
\end{displaymath}
and
\begin{displaymath}
\left(
  \begin{array}{ccccccc}
    H_3 & 0 & 0 & \cdots & 0 & 0 &  \\
      & H_3 & 0 & \cdots & 0 & 0 & (-1)^t{x}_{p+2t}^{(t)} \\
      &   & H_3 & \cdots & 0 & 0 & (-1)^{t-1}{x}_{p+2t-2}^{(t)} \\
      &   &   & \ddots & \vdots & \vdots & \vdots \\
      &   &   &   & H_3 & 0 & (-1)^2{x}_{p+4}^{(t)} \\
      &   &   &   &   & H_3 & (-1){x}_{p+2}^{(t)} \\
      &   &   &   &   &   & x_p \\
  \end{array}
\right)_{(t+2)\times (t+2),}
\end{displaymath}
respectively. Here $x_{p+2i}^{(t)}$ $(1\leq i\leq t)$ in the first
matrix are different from those in the second one.
\end{enumerate}\end{remark}

For $H=H_{2s+1}$ $(s\geq2)$ only (i.e., in the case $H_i=0$, $i\not=
2s+1$) and $[x_p]_{2t+3}\in E_{2t+3}^{p,q}$ $(t\geq 1)$, we make use
of Theorem \ref{differentials1} to get that
\begin{equation}\label{66}d_{2t+3}[x_p]_{2t+3}=(-1)^{t}[\langle
\underbrace{0,\cdots,0}\limits_{t+1},x_p\rangle_A]_{2t+3}.\end{equation}
Obviously, some information has been concealed in the expression
above. Now we give another description of the differentials for this
special case.

\begin{theorem}\label{differentials2}
For  $H=H_{2s+1}$ $(s\geq 1)$ only and $[x_p]_{2t+3}\in
E_{2t+3}^{p,q}$ $(t\geq 1)$, the differential of the spectral
sequence (\ref{ss}) $d_{2t+3}: E_{2t+3}^{p, q}\to E_{2t+3}^{p+2t+3,
q-2t-2}$ is given by
\[d_{2t+3}[x_p]_{2t+3}=\left\{ \begin{array}{ll}
[H_{2s+1} \wedge x_p]_{2t+3} & t=s-1,\\

(-1)^{l-1}[\langle \underbrace{H_{2s+1},\cdots,H_{2s+1}}\limits_{l},x_p\rangle_B]_{2t+3}  & t=ls-1~(l\geq 2),\\

0 & \text{otherwise,} \end{array} \right.
\]
and $[\langle
\underbrace{H_{2s+1},\cdots,H_{2s+1}}\limits_{l},x_p\rangle_{B}]_{2t+3}$
is independent of the choice of the defining system $B$ obtained
from \ref{4.2}.
\end{theorem}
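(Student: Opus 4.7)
The plan is to reduce Theorem \ref{differentials2} to the combination of Theorem \ref{4.2} and the explicit defining system supplied by Lemma \ref{ds}(2). First I would separate into three cases according to whether $t=s-1$, $t=ls-1$ for some $l\geq 2$, or neither. In the first and third cases, Theorem \ref{4.2} already gives the answer (a cup product with $H_{2s+1}$, respectively $0$), so there is nothing to compute: no Massey product is involved and the independence assertion is vacuous.

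The essential case is $t=ls-1$ with $l\geq 2$. Here I would invoke Lemma \ref{ds}(2), which manufactures a defining system $B=(a_{i,j})$ for the $(l+1)$-fold Massey product $\langle H_{2s+1},\ldots,H_{2s+1},x_p\rangle$ directly from the iteratively constructed forms $x_{p+2is}^{(l-1)}$ of Theorem \ref{4.2}. Its matrix is displayed in (\ref{small}): diagonal entries are $H_{2s+1}$, all strictly upper-triangular entries $a_{i,j}$ with $1\leq i<j\leq l$ vanish, and the last column carries $a_{i,l+1}=(-1)^{l+1-i}x_{p+2(l+1-i)s}^{(l-1)}$ for $2\leq i\leq l$ with $a_{l+1,l+1}=x_p$.

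The only real calculation is to read off the related cocycle
\[
c(B)=\sum_{r=1}^{l}\bar{a}_{1,r}\wedge a_{r+1,l+1}.
\]
Since $a_{1,r}=0$ for $r\geq 2$, the sum collapses to the single term $\bar{a}_{1,1}\wedge a_{2,l+1}$. Using the convention $\bar{x}=(-1)^{1+\deg x}x$ together with the oddness of $2s+1$, one has $\bar{a}_{1,1}=\bar{H}_{2s+1}=H_{2s+1}$, and therefore $c(B)=(-1)^{l-1}H_{2s+1}\wedge x_{p+2(l-1)s}^{(l-1)}$. Combining this with $\langle H_{2s+1},\ldots,H_{2s+1},x_p\rangle_B=[c(B)]$ from Definition \ref{Massey1} and the cup-product formula from Theorem \ref{4.2}, the two sign factors $(-1)^{l-1}$ cancel and the desired identity
\[
d_{2t+3}[x_p]_{2t+3}=(-1)^{l-1}[\langle H_{2s+1},\ldots,H_{2s+1},x_p\rangle_B]_{2t+3}
\]
falls out.

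Finally, the claim that $[\langle H_{2s+1},\ldots,H_{2s+1},x_p\rangle_B]_{2t+3}$ is independent of the choice of defining system $B$ obtained from Theorem \ref{4.2} follows from the arbitrariness in the construction there: every such $B$ yields the same right-hand side $d_{2t+3}[x_p]_{2t+3}$, which is an intrinsic element of $E_{2t+3}^{p+2t+3,q-2t-2}$. I expect the only mildly delicate step is bookkeeping the sign convention $\bar{\phantom{x}}$ against the odd degree of $H_{2s+1}$ and against the signs placed in the last column of (\ref{small}); once those cancellations are written out, everything else is an immediate appeal to Theorem \ref{4.2} and Lemma \ref{ds}(2).
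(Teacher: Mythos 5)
Your proposal is correct and follows essentially the same route as the paper: split into the cases $t=s-1$, $t=ls-1$ ($l\geq 2$), and the rest, reduce everything to Theorem \ref{4.2}, and in the Massey-product case read off $c(B)=(-1)^{l-1}H_{2s+1}\wedge x_{p+2(l-1)s}^{(l-1)}$ from the defining system of Lemma \ref{ds}(2), with independence following from the arbitrariness of $B$. The only difference is that you spell out the collapse of the sum $\sum_r \bar{a}_{1,r}\wedge a_{r+1,l+1}$ and the sign bookkeeping explicitly, which the paper leaves implicit in the displayed matrix (\ref{small}).
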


\begin{proof} When $t=s-1$, the result follows from
Theorem \ref{4.2}.

When $t=ls-1$ $(l\geq 2)$, from Lemma \ref{ds} (2) we know that
there exist defining systems for $\langle
\underbrace{H_{2s+1},\cdots,H_{2s+1}}\limits_{l},x_p\rangle$
obtained from Theorem \ref{4.2}. For any defining system $B$ given
by Theorem \ref{4.2}, by (\ref{small}) we get
$c(B)=(-1)^{l-1}H_{2s+1}\wedge x_{p+2(l-1)s}^{(l-1)}.$ By Definition
\ref{Massey1},
\begin{equation}\label{0306} \langle
\underbrace{H_{2s+1},\cdots,H_{2s+1}}\limits_{l},x_p\rangle_B=[c(B)].
\end{equation}
Then by Theorem \ref{4.2}, we have $$\begin{array}{ll}
d_{2t+3}[x_p]_{2t+3}&=[H_{2s+1}\wedge x_{p+2(l-1)s}^{(l-1)}]_{2t+3}\\
&=(-1)^{l-1} [\langle
\underbrace{H_{2s+1},\cdots,H_{2s+1}}\limits_{l},x_p\rangle_B]_{2t+3}.
\end{array}$$
Thus \begin{equation*}d_{2t+3}[x_p]_{2t+3}=(-1)^{l-1} [\langle
\underbrace{H_{2s+1},\cdots,H_{2s+1}}\limits_{l},x_p\rangle_B]_{2t+3}.\end{equation*}
By the arbitrariness of $B$, we have $[\langle
\underbrace{H_{2s+1},\cdots,H_{2s+1}}\limits_{l},x_p\rangle_{B}]_{2t+3}$
is independent of the choice of the defining system $B$ obtained
from Theorem \ref{4.2}.

For the rest cases of $t$, the results follows from Theorem
\ref{4.2}.

The proof of this theorem is completed.
\end{proof}

\begin{remark}\label{5.9}
We now use the special case that $H=H_5$ and $d_9[x_p]_9$ to
illustrate the compatibility between Theorems \ref{differentials1}
and \ref{differentials2} for $s=2$ and $t=3$.

Note that in this case $H_3=0$ and $H_i=0$ for $i>5$. By Theorem
\ref{differentials1}, we get the corresponding matrix associated to
the defining system A for $\langle 0,0,0,0,x_p\rangle_{A}$ is
\begin{equation}\label{5.3}
\left(
  \begin{array}{ccccccc}
    0 & -H_5 & 0 & 0 &  \\
      & 0 & -H_5 & 0 & -x_{p+6}^{(3)} \\
      &   & 0 & -H_5 & x_{p+4}^{(3)} \\
      &   &   & 0 & -x_{p+2}^{(3)} \\
      &   &   &   & x_p \\
  \end{array}
\right)_{5\times 5}
\end{equation}and
\begin{equation}\label{xgl}\tilde{d}_9[x_p]_9=-[\langle
0,0,0,0,x_p\rangle_{A}]_9.\end{equation}

By Theorem \ref{differentials2}, in this case the matrix associated
to the defining system $B$ for $\langle H_5,H_5,x_p\rangle_{B}$ is
\begin{equation}\label{5.4}
\left(
  \begin{array}{ccccccc}
     H_5 & 0 &  \\
         & H_5 & -x_{p+4}^{(1)} \\
         &    & x_p \\
  \end{array}
\right)_{3\times 3.}
\end{equation}
and
\begin{equation}\label{lxg}\bar{d}_9[x_p]_9=-[\langle
H_5,H_5,x_p\rangle_{B}]_9.\end{equation}

We claim that $\langle H_5,H_5,x_p\rangle_{\star}=
\langle0,0,0,0,x_p\rangle_{\star}$. For any
defining system $B$ above, there is a defining system ${\tilde B}$
\begin{displaymath} \left(
  \begin{array}{ccccccc}
    0 & -H_5 & 0 & 0 &  \\
      & 0 & -H_5 & 0 & 0 \\
      &   & 0 & -H_5 & x_{p+4}^{(1)} \\
      &   &   & 0 &  0\\
      &   &   &   & x_p \\
  \end{array}
\right)_{5\times 5}
\end{displaymath} for $\langle0,0,0,0,x_p\rangle$ which can be obtained from Theorem \ref{4.1} such that
$$\langle0,0,0,0,x_p\rangle_{{\tilde B}}=\langle
H_5,H_5,x_p\rangle_B.$$ Hence $\langle
H_5,H_5,x_p\rangle_{\star}\subseteq
\langle0,0,0,0,[x_p]\rangle_{\star}.$ On the other hand, for any
defining system $A$ above there is also a defining system $\bar{A}$
\begin{equation*}
\left(
  \begin{array}{ccccccc}
     H_5 & 0 &  \\
         & H_5 & -x_{p+4}^{(3)} \\
         &    & x_p \\
  \end{array}
\right)_{3\times 3.}
\end{equation*}
for $\langle H_5,H_5,x_p\rangle$ which can be obtained from Theorem
\ref{4.2} such that
$$\langle H_5,H_5,x_p\rangle_{\bar{A}}=\langle0,0,0,0,x_p\rangle_{{A}}.$$ Therefore $\langle0,0,0,0,x_p\rangle_{\star} \subseteq\langle
H_5,H_5,x_p\rangle_{\star},$ and the claim follows.

By Theorem \ref{differentials1} and Remark \ref{lxa}, we have that
$\tilde{d}_5[y_p]_5=-[\langle 0,0,y_p\rangle_{A}]_5=-[-H_5\wedge
y_p]_5=[H_5\wedge y_p]_5$. By Theorem \ref{differentials2},
$\bar{d}_5[y_p]_5=[H_5\wedge y_p]_5$. By Proposition \ref{3.4},
$\tilde{d}_1=\bar{d}_1=d$ and $\tilde{d}_3=\bar{d}_3=0$. It follows
that
 $\tilde{d}_5=\bar{d}_5.$

By Theorems \ref{differentials1} and \ref{4.1}, we have that
$\tilde{d}_7[z_p]_7=[\langle 0,0,0,z_p\rangle_{A}]_7=[-H_5\wedge
z^{(2)}_{p+2}]_7$, where $z^{(2)}_{p+2}$ is an arbitrary
$(p+2)$-form satisfying $d(z^{(2)}_{p+2})=0\wedge z_p$. By Remark
\ref{ljy} (2), we take $z^{(2)}_{p+2}=0$. Then we have
$\tilde{d}_7[z_p]_7=0$, i.e., $\tilde{d}_7=0$. At the same time, we
also have $\bar{d}_7=0$ from Theorem \ref{differentials2}. Thus
$\tilde{d}_7=\bar{d}_7=0$.

By $\tilde{E}_1^{p,q}=\bar{E}_1^{p,q}$, $\tilde{d}_i=\bar{d}_i$ for
$1\leq i\leq 7$ and $\langle H_5,H_5,x_p\rangle_{\star}=
\langle0,0,0,0,x_p\rangle_{\star}$, we can conclude that
$\tilde{d}_9=\bar{d}_9$ from (\ref{xgl}) and (\ref{lxg}).
\end{remark}

\section{The indeterminacy of differentials in the spectral sequence (\ref{ss})}

Let $[x_p]_{r}\in E_{r}^{p, q}$. The indeterminacy of $[x_p]$ is a
normal subgroup $G$ of $H^{\ast}(M)$, which means that if there is
another element $[y_p]\in H^{p}(M)$ which also represents the class
$[x_p]_r \in E_{r}^{p, q}$, then $[y_p]-[x_p]\in
 G$.

In this section, we will show that for
$H=\sum_{i=1}^{[\frac{n-1}{2}]}H_{2i+1}$ and $[x_p]_{2t+3}$, the
indeterminacy of the differential $d_{2t+3}[x_p]\in E_{2}^{p+2t+3,
q-2t-2}$ is a normal subgroup of $H^{\ast}(M)$.

From the long exact sequence (\ref{3.3}), we have a commutative
diagram
\begin{equation}\label{6.2}
\xymatrix{& &\vdots\ar[d]_{i^*} & &\vdots \ar[d]_{i^*} & &\\
& \cdots\ar[r]^{\delta}& H_D^{p+q}(K_{p+1})\ar[d]_{i^*}
\ar[r]^{j^*}& H_D^{p+q}(K_{p+1}/K_{p+2})
\ar[r]^{\delta}&H_D^{p+q+1}(K_{p+2})\ar[d]_{i^*} \ar[r]^{j^*}&\cdots\\
&\cdots\ar[r]^{\delta}&H_D^{p+q}(K_p)\ar[d]_{i^*}\ar[r]^{j^*}&
H_D^{p+q}(K_p/K_{p+1})\ar[r]^{\delta}
&H_D^{p+q+1}(K_{p+1}) \ar[d]_{i^*}\ar[r]^{j^*}&\cdots\\
&\cdots\ar[r]^{\delta}& H_D^{p+q}(K_{p-1})\ar[d]_{i^*}\ar[r]^{j^*}&
H_D^{p+q}(K_{p-1}/K_{p})\ar[r]^{\delta}&H_D^{p+q+1}(K_{p}) \ar[d]_{i^*}\ar[r]^{j^*}&\cdots\\
& &\vdots  & &\vdots  & &\\}
\end{equation}
in which any sequence consisting of a vertical map $i^{*}$ followed
by two horizontal maps $j^*$ and $\delta$ and then a vertical map
$i^*$ followed again by $j^{\ast}$ and $\delta$ and iteration of
this is exact. From this diagram there is obtained a spectral
sequence in which $E_1^{p,q}=H_D^{p+q}(K_p/K_{p+1})$ and for $r\geq
2$, $E_r^{p,q}$ is defined to be the quotient
$Z_{r}^{p,q}/B_r^{p,q}$, where
\begin{equation}\label{6.00}
\begin{array}{l}
Z_r^{p,q}=\delta^{-1}(i^{*r-1}H_D^{p+q+1}(K_{p+r})),\\
B_r^{p,q}=j^*({\rm ker}[i^{*r-1}: H_D^{p+q}(K_p)\rightarrow H_D^{p+q}(K_{p-r+1})]).\\
\end{array}
\end{equation}
We also have a sequence of inclusions
\begin{equation}\label{6.10}
B_2^{p,q}\subset\cdots\subset B_r^{p,q}\subset
B_{r+1}^{p,q}\subset\cdots\subset Z_{r+1}^{p,q}\subset
Z_r^{p,q}\subset\cdots\subset Z_2^{p,q}.
\end{equation}
By \cite{Ma,Ma1}, the $E_r^{\ast,\ast}$-term defined above is the
same as the one in the spectral sequence (\ref{ss}). A similar
argument about a homology spectral sequence is given in \cite[p.
472-473]{S}.

\begin{theorem}\label{6.1}
Let $H=\sum_{i=1}^{[\frac{n-1}{2}]}H_{2i+1}$ and $[x_p]_r\in
E_{r}^{p, q}$ $(r\geq 3)$, then the indeterminacy of $[x_p]\in
E_{2}^{p, q}\cong H^{p}(M)$ is the following normal subgroup of
$H^{p}(M)$£º
\begin{displaymath}
\frac{{\rm im} [\bar{\delta}: H_D^{p+q-1}(K_{p-r+1}/K_p)\to
H_D^{p+q}(K_{p}/K_{p+1})]}{{\rm im} [d:\Omega^{p-1}(M)\to
\Omega^{p}(M)]},
\end{displaymath}
where $d$ is just the exterior differentiation and $\bar{\delta}$ is
the connecting homomorphism of the long exact sequence induced by
the short exact sequence of cochain complexes
$$0\longrightarrow K_p/K_{p+1}\stackrel{\bar{i}}\longrightarrow
K_{p-r+1}/K_{p+1}\stackrel{\bar{j}}\longrightarrow
K_{p-r+1}/K_{p}\longrightarrow 0.$$
\end{theorem}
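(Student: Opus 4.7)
The plan is to identify the indeterminacy, viewed as a subgroup of $E_2^{p,q}\cong H^p(M)$, with the quotient $B_r^{p,q}/B_2^{p,q}$, and then to rewrite both $B_r^{p,q}$ and $B_2^{p,q}$ using the connecting homomorphisms of suitable short exact sequences of cochain complexes.

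First I would observe that, under the identification $E_2^{p,q}=Z_2^{p,q}/B_2^{p,q}\cong H^p(M)$, two classes $[x_p],[y_p]\in H^p(M)$ descend to the same element of $E_r^{p,q}=Z_r^{p,q}/B_r^{p,q}$ if and only if their difference lies in $B_r^{p,q}/B_2^{p,q}$; this is forced by the chain of inclusions (\ref{6.10}). Hence the indeterminacy is exactly the subgroup $B_r^{p,q}/B_2^{p,q}$ of $H^p(M)$, and it remains to realise it in the stated form.

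Next I would analyse the numerator. The short exact sequence of complexes $0\to K_p\to K_{p-r+1}\to K_{p-r+1}/K_p\to 0$ yields a connecting homomorphism $\delta':H_D^{p+q-1}(K_{p-r+1}/K_p)\to H_D^{p+q}(K_p)$ whose image is exactly $\ker i^{*r-1}$, so that $B_r^{p,q}=j^*({\rm im}\,\delta')$ from (\ref{6.00}). A short naturality diagram chase, performed by lifting a cocycle $c\in K_{p-r+1}/K_p$ first to $\tilde c\in K_{p-r+1}$ and then projecting to $\bar c\in K_{p-r+1}/K_{p+1}$, yields $j^*[D\tilde c]=[D\bar c]$ and hence $j^*\circ\delta'=\bar\delta$, so that $B_r^{p,q}={\rm im}\,\bar\delta$ inside $E_1^{p,q}$. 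For the denominator I would specialise the same argument to $r=2$: the relevant sequence becomes $0\to K_p/K_{p+1}\to K_{p-1}/K_{p+1}\to K_{p-1}/K_p\to 0$, and because $H_D^{p+q-1}(K_{p-1}/K_p)\cong\Omega^{p-1}(M)$ (with $D$ reducing to $d$ on the lowest-degree piece, using that $q$ is even), a direct evaluation returns $\bar\delta_2(c_{p-1})=dc_{p-1}$. Hence $B_2^{p,q}={\rm im}\,d$, and dividing gives the claim $B_r^{p,q}/B_2^{p,q}\cong{\rm im}\,\bar\delta/{\rm im}\,d$.

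The main delicate step is the naturality identity $j^*\circ\delta'=\bar\delta$, which must be established at the level of explicit cocycle representatives, together with the verification that ${\rm im}\,\bar\delta$ actually lies inside the closed $p$-forms $Z_2^{p,q}$ so that the quotient truly sits inside $H^p(M)$. Fortunately the latter is automatic from the inclusion $B_r^{p,q}\subset Z_2^{p,q}$ recorded in (\ref{6.10}), so the whole argument reduces to a clean diagram chase followed by the one-line identification of $\bar\delta_2$ with exterior differentiation.
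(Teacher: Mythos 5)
Your proposal is correct and follows essentially the same route as the paper: identify the indeterminacy with $B_r^{p,q}/B_2^{p,q}$ via the tower of inclusions, realise $B_r^{p,q}=j^*(\ker i^{*\,r-1})=j^*(\mathrm{im}\,\delta')=\mathrm{im}\,\bar\delta$ by naturality of connecting homomorphisms, and compute $B_2^{p,q}=\mathrm{im}\,d$ from the $r=2$ case. The only cosmetic difference is that the paper identifies $\bar\delta$ with $d_1=j_1k_1$ and invokes Proposition~\ref{e3}, whereas you evaluate $\bar\delta$ directly on a $(p-1)$-form; these are the same computation.
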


\begin{proof}

From the tower (\ref{6.10}) above, we get a tower of subgroups of
$E_2^{p,q}$
\begin{equation*}\begin{split}
B_3^{p,q}/B_2^{p,q}&\subset\cdots\subset B_r^{p,q}/B_2^{p,q}
\subset\cdots \subset
Z_r^{p,q}/B_2^{p,q}\subset\\
&\cdots \subset Z_3^{p,q}/B_2^{p,q}\subset Z_2^{p,q}/B_2^{p,q}=
E_2^{p,q}.
\end{split}
\end{equation*}
Note that $E_{r}^{p,q}\cong
(Z_{r}^{p,q}/B_2^{p,q})/(B_{r}^{p,q}/B_2^{p,q}).$ It follows that
the indeterminacy of $[x_p]$ is the normal subgroup
$B_{r}^{p,q}/B_2^{p,q}$ of $H^{p}(M)$.

From the short exact sequences of cochain complexes
$$\begin{array}{c}
0\longrightarrow K_{p}\stackrel{i^{\prime}}\longrightarrow
K_{p-r+1}\stackrel{j^{\prime}}\longrightarrow
K_{p-r+1}/K_{p}\longrightarrow 0,\\
0\longrightarrow K_p/K_{p+1}\stackrel{\bar{i}}\longrightarrow
K_{p-r+1}/K_{p+1}\stackrel{\bar{j}}\longrightarrow
K_{p-r+1}/K_{p}\longrightarrow 0,
\end{array}
$$
we can get the following long exact sequence of cohomology groups
\begin{equation}\label{9.0}\begin{array}{c}
\cdots\stackrel{\delta^{\prime}}{\longrightarrow}
H_D^{s}(K_{p})\stackrel{i^{\prime\ast}}\longrightarrow
H_D^{s}(K_{p-r+1})\stackrel{j^{\prime\ast}}\longrightarrow
H_D^{s}(K_{p-r+1}/K_{p})\stackrel{\delta^{\prime}}\longrightarrow
\cdots,\\
\cdots\stackrel{\bar{\delta}}{\longrightarrow}
H_D^{s}(K_{p}/K_{p+1})\stackrel{\bar{i}^{\ast}}\longrightarrow
H_D^{s}(K_{p-r+1}/K_{p+1})\stackrel{\bar{j}^{\ast}}\longrightarrow
H_D^{s}(K_{p-r+1}/K_{p})\stackrel{\bar{\delta}}\longrightarrow
\cdots,
\end{array}
\end{equation}
where $\delta^{\prime}$ and $\bar{\delta}$ are the connecting
homomorphisms.

Combining (\ref{3.3}) and (\ref{9.0}), we have the following
commutative diagram of long exact sequences
\begin{equation}\label{100}
{\xymatrix{
  H_D^{p+q-1}(K_{p-r+1}/K_p) \ar[dr]_{\bar{\delta}} \ar[r]^{\delta^{\prime}} & H_D^{p+q}(K_p) \ar[d]^{j^*} \ar[r]^{i^{{\prime\ast}}} &H_D^{p+q}(K_{p-r+1}) \\
               &  H_D^{p+q}(K_{p}/K_{p+1}) \ar[d]^{\delta} \ar[dr]_{\bar{i}^*}  &   \\
                & H_D^{p+q+1}(K_{p+1}) & H_D^{p+q}(K_{p-r+1}/K_{p+1}).
                }}
\end{equation}

Using the commutative diagram above and the fact that $i^{{\ast}^
{r-1}}={i}^{\prime\ast}$, we have
\begin{displaymath}
\begin{array}{ll} B_{r}^{p,q}&=j^*({\rm
ker}[i^{*^{r-1}}: H_D^{p+q}(K_p)\rightarrow
H_D^{p+q}(K_{p-r+1})])\\&=j^*({\rm ker}[~
i^{\prime\ast}:H_D^{p+q}(K_p)\longrightarrow H_D^{p+q}(K_{p-r+1})])\\
&\cong j^*({\rm im}[{\delta}^{'}: H_D^{p+q-1}(K_{p-r+1}/K_p)\rightarrow H_D^{p+q}(K_p)])\\
&\cong{\rm im} [\bar{\delta}: H_D^{p+q-1}(K_{p-r+1}/K_p)\to
H_D^{p+q}(K_{p}/K_{p+1})].\end{array}
\end{displaymath}

When $r=2$, from (\ref{100}) we have that
$$\bar{\delta}=\delta^{\prime}j^{\ast}: H_D^{p+q-1}(K_{p-1}/K_p)\to
H_D^{p+q}(K_{p}/K_{p+1}).$$ From (\ref{3.4}), it follows that
$\bar{\delta}=d_1$. By Proposition \ref{e3}, $\bar{\delta}=d$. Thus
we have
$$\begin{array}{ll}
B_2^{p,q}&\cong {\rm im} [\bar{\delta}: H_D^{p+q-1}(K_{p-1}/K_p)\to
H_D^{p+q}(K_{p}/K_{p+1})]\\
&\cong {\rm im} [d:\Omega^{p-1}(M)\to \Omega^{p}(M)].\end{array}$$

The desired result follows.
\end{proof}

By Theorem \ref{6.1}, we obtain the following corollary.

\begin{corollary}
In Theorem \ref{differentials1}, for $d_{2t+3}[x_p]_{2t+3}\in
E_{2t+3}^{p+2t+3, q-2t-2}$ we have the indeterminacy of
$d_{2t+3}[x_p]$ is a normal subgroup of $H^{p+2t+3}(M)$
\begin{displaymath}
\frac{{\rm im} [\bar{\delta}: H_D^{p+q}(K_{p+1}/K_{p+2t+3})\to
H_D^{p+q+1}(K_{p+2t+3}/K_{p+2t+4})]}{{\rm im} [d:
\Omega^{p+2t+2}(M)\to \Omega^{p+2t+3}(M)]},
\end{displaymath}
where $d$ is just the exterior differentiation and $\bar{\delta}$ is
the connecting homomorphism of the long exact sequence induced by
the short exact sequence of cochain complexes
$$0\longrightarrow K_{p+2t+3}/K_{p+2t+4}\stackrel{\bar{i}}\longrightarrow
K_{p+1}/K_{p+2t+4}\stackrel{\bar{j}}\longrightarrow
K_{p+1}/K_{p+2t+3}\longrightarrow 0.$$
\end{corollary}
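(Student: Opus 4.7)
The plan is to obtain this corollary as a direct specialization of Theorem~\ref{6.1}: the differential $d_{2t+3}[x_p]$ is itself a class in $E_2^{p+2t+3,\,q-2t-2}\cong H^{p+2t+3}(M)$ that survives to $E_{2t+3}^{p+2t+3,\,q-2t-2}$, so its indeterminacy as a representative is computed by the same general formula, only with the indices $(p,q,r)$ of Theorem~\ref{6.1} replaced by $(p+2t+3,\,q-2t-2,\,2t+3)$.

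First I would verify that $d_{2t+3}[x_p]$ really fits the hypothesis of Theorem~\ref{6.1}. By Proposition~\ref{e3} and Corollary~\ref{4.6}, $E_2^{p+2t+3,q-2t-2}=E_3^{p+2t+3,q-2t-2}\cong H^{p+2t+3}(M)$, and the bidegree of $d_{2t+3}$ shows that $d_{2t+3}[x_p]_{2t+3}$ survives from a class in $E_{2t+3}^{p+2t+3,q-2t-2}$. Thus Theorem~\ref{6.1} applies with the new role of $p$ played by $p+2t+3$ and the new role of $q$ by $q-2t-2$.

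Next I would carry out the index substitution. With $r=2t+3$ we compute $p-r+1=(p+2t+3)-(2t+3)+1=p+1$, while the cohomological degree $p+q-1$ becomes $(p+2t+3)+(q-2t-2)-1=p+q$ and $p+q$ becomes $p+q+1$. Substituting these into the formula of Theorem~\ref{6.1} converts the numerator into
\[
\mathrm{im}\bigl[\bar{\delta}: H_D^{p+q}(K_{p+1}/K_{p+2t+3})\to H_D^{p+q+1}(K_{p+2t+3}/K_{p+2t+4})\bigr],
\]
and the denominator into $\mathrm{im}[d:\Omega^{p+2t+2}(M)\to \Omega^{p+2t+3}(M)]$, matching the stated formula. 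At the same time, the short exact sequence of cochain complexes that defines $\bar{\delta}$ in Theorem~\ref{6.1} becomes precisely
\[
0\longrightarrow K_{p+2t+3}/K_{p+2t+4}\stackrel{\bar{i}}\longrightarrow K_{p+1}/K_{p+2t+4}\stackrel{\bar{j}}\longrightarrow K_{p+1}/K_{p+2t+3}\longrightarrow 0,
\]
which is the sequence named in the corollary.

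Since everything reduces to the substitution described above, there is no real analytic obstacle; the only thing to double-check is the bookkeeping of the filtration indices and the shift in cohomological degree, which has already been accounted for by the $(r,1-r)$ bidegree of $d_r$ and by the $2$-periodicity in $q$ noted after Proposition~\ref{specs}. Therefore the proof is essentially a single invocation of Theorem~\ref{6.1} with the substitution $(p,q,r)\mapsto(p+2t+3,\,q-2t-2,\,2t+3)$, followed by simplification of the indices appearing in the numerator, the denominator, and the defining short exact sequence.
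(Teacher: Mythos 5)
Your proof is correct and follows the same route as the paper: the paper's own proof is exactly the one-line substitution $(p,q,r)\mapsto(p+2t+3,\,q-2t-2,\,2t+3)$ in Theorem~\ref{6.1}, and your index bookkeeping (yielding $p-r+1=p+1$, degrees $p+q$ and $p+q+1$, and the stated short exact sequence) checks out.
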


\begin{proof}
In Theorem \ref{6.1} $r, p$ and $q$ are replaced by $2t+3, p+2t+3$
and $q-2t-2$, then the desired result follows.
\end{proof}

\noindent{\bf Acknowledgment} The authors would like to thank Jim
Stasheff for helpful comments.

\end{document}